\newcommand{\dx}{\partial_x}
\newcommand{\dy}{\partial_y}
\newcommand{\mfn}{M}
\newcommand{\Lal}{\mathcal L_{\mfn,\alpha}}
\newcommand{\CoPo}{\mathbb A}
\newcommand{\HoPo}[1]{\mathbb A_{#1}}
\newcommand{\xG}{x_c}
\newcommand{\yG}{y_c}
\newcommand{\pG}{(x_c,y_c)}
\newcommand{\kappqAss}{Assume that the parameter $q\in\mathbb N$ is given and that $\kappa^2$ is a function of class $\mathcal C^{q-1}$ in a neighborhood of a given point $\pG\in\mathbb R^2$. }
\newcommand{\kappqAssnz}{Assume that the parameter $q\in\mathbb N$ is given and that $\kappa^2$ is a function of class $\mathcal C^{q-1}$ in a neighborhood of a given point $\pG\in\mathbb R^2$ with $\kappa^2\pG\neq 0$. }
\newcommand{\muAss}{Consider the set of unknowns  $\{\mu_{i_x,i_y}, (i_x,i_y)\in\mathbb N_0^2, i_x+i_y\leq q+1\}$ constructed in Algorithm \ref{Algo:AbGPW}, }
\newtheorem{thm}{Theorem}
\newtheorem{prop}{Proposition}
\newtheorem{dfn}{Definition}
\newtheorem{lmm}{Lemma}
\newtheorem{rmk}{Remark}
\pgfplotsset{compat=newest,
legend style={
font=\footnotesize,
rounded corners=2pt,
}}
\title{
Amplitude-based Generalized Plane Waves: new Quasi-Trefftz functions for scalar equations in 2D
}
\author{Lise-Marie Imbert-G\'erard\footnote{
University of Arizona, {\it lmig@math.arizona.edu}.
\underline{Acknowledgement} The research reported here was supported by the NSF grant DMS-1818747.
}}
\begin{document}
\maketitle

\begin{abstract}
%
Generalized Plane Waves (GPWs) were introduced to take advantage of Trefftz methods for problems modeled by variable coefficient equations. Despite the fact that GPWs do not satisfy the Trefftz property, i.e. they are not exact solutions to the governing equation, they instead satisfy a quasi-Trefftz property: they are only approximate solutions. They lead to high order numerical methods, and this quasi-Trefftz property is critical for their numerical analysis.

The present work introduces a new family of GPWs, amplitude-based. The motivation lies in the poor behavior of the phase-based GPW approximations in the pre-asymptotic regime, which will be tamed by avoiding high degree polynomials within an exponential.
The new ansatz is introduces higher order terms in the amplitude rather than the phase of a plane wave as was initially proposed.
The new functions' construction and the study of their interpolation properties are guided by the roadmap proposed in \cite{IGS}. For the sake of clarity, the first focus is on the two-dimensional Helmholtz equation with spatially-varying wavenumber. The extension to a range of operators allowing for anisotropy in the first and second order terms follows. Numerical simulations illustrate the theoretical study of the new quasi-Trefftz functions.
\end{abstract}

\section{Introduction}
Our interest lies in the numerical simulation of time-harmonic wave propagation in inhomogeneous media, for boundary value problems with a scalar-valued governing equation. While homogeneous media lead to governing Partial Differential Equations (PDEs) with constant coefficients, inhomogeneous media lead to variable-coefficient PDEs. 
In this article we introduce a new family of basis functions to be used as a basis in a quasi-Trefftz numerical method, show how to compute them, and study their interpolation properties.
Trefftz methods, a particular type of Galerkin methods, have the specificity to rely on function spaces of solutions to the governing PDE as opposed to standard function spaces of sufficiently smooth functions. They take advantage of this specificity via the derivation of a weak formulation of the boundary value problem of interest, leading to a weak formulation with no volume term. This of course results in a considerable reduction of the discretization's computational cost. We will hereafter refer to solutions of the governing PDE as Trefftz functions, and refer to spaces of Trefftz functions as Trefftz spaces.

The initial idea of using solutions to the governing PDE was introduced by Trefftz in 1926 \cite{Trefftz}, more recently translated from German \cite{transl}, to obtain estimates on the solution of boundary value problems. It was then developed as early as in the 30s under the name of Trefftz method to study for instance problems of elasticity \cite{Elast47,Elast53} or torsion \cite{Torsion31,Torsion48}.
Later the method was fruitfully extended towards numerical approximation. In \cite{Collatz}, the author classifies the method as a boundary method, as the solution to a boundary value problem is approximated by a linear combination of Trefftz functions while this linear combination is constructed to take into account the boundary condition, see also \cite{Herrera}.  Trefftz functions were used as a local basis on a region and coupled to a Finite Element region in  \cite{Zienk}. In \cite{Jirous77},  they were used as a local basis on individual mesh elements instead of polynomial basis functions; an overview of such early Trefftz-type Galerkin methods can be found in \cite{Jirous}.

Several versions of Trefftz methods have been actively developed towards the numerical simulation of time-harmonic wave propagation problems for more than twenty years \cite{survey}, such as the Trefftz-Discontinuous-Galerkin method \cite{git,GH14} or the Ultra-Weak Variational Formulation \cite{uwvf,cess,cd98}. More recent development include the Trefftz Virtual Element Method \cite{Tvem1,Tvem2}. Trefftz spaces as well as Trefftz functions are absolutely fundamental to all Trefftz methods, throughout the weak formulation derivation as well as throughout the analysis of the resulting numerical method. 

Ideally the discretization of a Trefftz weak formulation is performed via a finite-dimensional vector subspace of the Trefftz space. 
Trefftz functions are available for several problems of time-harmonic wave propagation through homogeneous media, i.e. when the governing PDE has constant coefficients, for instance the most obvious plane, circular or spherical waves, but also functions constructed from Bessel functions \cite{Zienk91,Teemu}. By contrast, for wave propagation through inhomogeneous media, i.e. when the governing PDE has either smooth or piecewise smooth variable coefficients, in general Trefftz functions are not available. 
Nevertheless the idea of 
   replacing Trefftz functions by  approximate solutions
to the governing PDE to discretize a Trefftz formulation when the PDE has variable coefficients was introduced in \cite{LMD}, and various aspects of these functions, called Generalized Plane Waves (GPWs), as well as the resulting method, were studied in \cite{LMinterp,MC,IGM,IGS}.  

In general we will refer to approximate solutions to the PDE as quasi-Trefftz functions, and to associated methods as quasi-Trefftz methods. 
As the PDE coefficients are variable, there is no hope to guarantee global approximation properties, and therefore the construction of quasi-Trefftz functions should emphasize local properties.
For instance GPWs are constructed locally and satisfy a local approximation of the PDE in the sense of a Taylor expansion. In general, quasi-Trefftz functions are defined piece-wise, element per element on a mesh of the computational domain, and the approximation of the PDE is expected to be accurate locally on each element. We will therefore focus on the neighborhood of a generic point $(x_c,y_c)\in\mathbb R^2$, that could for instance be the centroid of each mesh element. 

The GPWs introduced in \cite{LMD} are phase-based, as they rely on the addition of higher order terms in the phase of classical PWs. Because of the presence of high degree polynomials within an exponential, they exhibited a poor behavior in the pre-asymptotic regime, i.e. at a distance $h\approx 1$ from $(x_c,y_c)$. This motivates the introduction of a new family of GPWs, avoiding the presence of high degree polynomials within an exponential. In this article we introduce a family of amplitude-based GPWs, with a phase identical to that of a PW but with higher degree polynomials in the amplitude.
The first challenge is to design of a new ansatz. 
Thanks to an appropriate choice of ansatz, the problem of constructing a GPW will be reformulated into a problem that has solutions, but most importantly a problem for which a particular solution can be constructed at a limited computational cost.
The second challenge is to decipher the relation between the new functions and PWs in order to study the local interpolation properties of these new GPWs. 


The construction of amplitude-based GPWs and the study of their interpolation properties will respectively be addressed in Sections \ref{sec:constr} and \ref{sec:int}, for the two-dimensional Helmholtz equation:
\begin{equation}
\label{eq:Helm}
\Big[-\Delta-\kappa^2(x,y) \Big] u = 0,
\end{equation}
following the roadmap proposed in \cite{IGS}.
The extension of these results to a set of anisotropic second order PDEs will be the focus of Section \ref{sec:Ext}. Section \ref{sec:NR} presents numerical experiments illustrating interpolation properties and emphasizing the improvement obtained with respect to phase-based GPWs in terms of interpolation properties.

To describe various useful  sets of indices we will write $\mathbb N_0=\mathbb N\cup \{0\}$.

\section{Construction of the Amplitude-based GPWs}
\label{sec:constr}
The construction of this new family of GPWs relies on the choice of a new ansatz, shifting the focus from the phase to the amplitude of PWs. Here we emphasize the motivation behind the new ansatz, and the reformulation of the problem of defining an amplitude-based GPW into a well-posed problem. The final goal of this section is to provide an algorithm to construct such GPWs guaranteeing that they satisfy an approximation of the PDE \eqref{eq:Helm}, see Algorithm \ref{Algo:AbGPW} and Proposition \ref{prop:SatisTE}.

\subsection{Amplitude-based versus phase-based GPWs}
\label{AbvsPb}
The original idea behind GPW was to start by considering a cPW, and its relation to the constant coefficient Helmholtz equation:
$$
\left\{
\begin{array}{l}
W(x,y) = \exp\Big( \lambda_{10} (x-\xG) + \lambda_{01} (y-\yG) \Big)  \quad \text{where}  \quad ( \lambda_{10}, \lambda_{01})\in\mathbb C^2,\\
(-\Delta-\kappa^2 )W = 0 \Leftrightarrow \lambda_{10}^2+ \lambda_{01}^2 +\kappa^2 = 0.
\end{array}
\right.
$$
In this case the ansatz chosen for $W$ has two degrees of freedom, namely $( \lambda_{10}, \lambda_{01})\in\mathbb C^2$. The single constraint $\lambda_{10}^2+ \lambda_{01}^2 +\kappa^2 = 0$ on these two degrees of freedom is sufficient to ensure that the ansatz $W$ solves the governing PDE. Moreover, by choosing $( \lambda_{10}, \lambda_{01}) = \mathrm i \kappa (\cos\theta,\sin\theta)$ for some real parameter $\theta$, the constraint is satisfied and any family of such functions, associated to any distinct values of $\theta\in[0,2\pi)$, is linearly independent so it is a basis for a finite-dimensional subspace of the Trefftz space for the Helmholtz equation. This subspace has been the most widely used to discretize Trefftz weak formulations.

In the case of a variable coefficient $\kappa(x,y)$ however, there is no general closed formula for exact solutions of the PDE. The original idea behind GPW might be summarized in two points:
\begin{itemize}
\item relaxing the Trefftz property, $(-\Delta-\kappa^2 )W = 0$, into an approximation $(-\Delta-\kappa^2 )G \approx 0$;
%
\item choosing an ansatz with more degrees of freedom by adding for higher order terms (HOT) to the phase of the classical PW: $G(x,y) = \exp\Big( \lambda_{10} (x-\xG) + \lambda_{01} (y-\yG) +HOT\Big)$.
\end{itemize}
The image of $G$ by the differential operator, i.e. the function $(-\Delta-\kappa^2 )G$, won't be zero, but instead it will locally approximate zero: it is the Taylor polynomial of this function that will be equal to zero. So the parameter $q$ will refer to the order of the Taylor expansion approximation. Throughout the construction process, $q$'s value remains unconstrained, it only comes into play to guarantee high order interpolation properties. A GPW was then initially defined in the vicinity of a point $\pG\in\mathbb R^2$ as
$$
\left\{
\begin{array}{l}
G(x,y) := \exp P(x,y) \text{ with } P\in\mathbb C[X,Y] \text{ such that }\\
(-\Delta-\kappa^2(x,y) )G(x,y) = O(|(x,y)-\pG|^q).
\end{array}
\right.
$$
As a consequence, the construction of a GPW was equivalent to the following problem
$$
\left\{
\begin{array}{l}
\text{Find a polynomial } P\in\mathbb C[X,Y] \text{ such that }\\
G(x,y) := \exp P(x,y) \text{ satisfies }  
(-\Delta-\kappa^2(x,y) )G(x,y) = O(|(x,y)-\pG|^q)
\end{array}
\right.
$$
Moreover, since
$$
(-\Delta-\kappa^2(x,y) )\exp P(x,y) =\big(-\Delta P (x,y) - |\nabla P(x,y)|^2-\kappa^2(x,y) \big)\exp P(x,y)
$$
while $\exp P$ is bounded in the vicinity of $\pG$,  the construction of a GPW was also equivalent to the following problem
\begin{equation}
\label{pb:PbGPW}
\left\{
\begin{array}{l}
\text{Find a polynomial } P\in\mathbb C[X,Y] \text{ such that }\\
-\Delta P (x,y) - |\nabla P(x,y)|^2-\kappa^2(x,y) = O(|(x,y)-\pG|^q)\\
G(x,y) := \exp P(x,y)
\end{array}
\right.
\end{equation}

We are now interested in exploring a new type of quasi-Trefftz functions, this time choosing an ansatz  with more degrees of freedom as higher order terms (HOT) added to the amplitude rather than the phase of a classical PW: $G(x,y) = (1+ HOT)\exp\Big( \lambda_{10} (x-\xG) + \lambda_{01} (y-\yG) \Big)$. An amplitude-based GPW is then  defined in the vicinity of a point $\pG\in\mathbb R^2$ as
$$
\left\{
\begin{array}{l}
G(x,y) := Q(x,y)\exp \Big( \lambda_{10} (x-\xG) + \lambda_{01} (y-\yG) \Big) \text{ with } Q\in\mathbb C[X,Y], ( \lambda_{10}, \lambda_{01})\in\mathbb C^2 \text{ such that }\\
(-\Delta-\kappa^2(x,y) )G(x,y) = O(|(x,y)-\pG|^q).
\end{array}
\right.
$$
As a consequence, the construction of an amplitude-based GPW is equivalent to the following problem
$$
\left\{
\begin{array}{l}
\text{Find } (Q, ( \lambda_{10}, \lambda_{01}) )\in\mathbb C[X,Y]\times\mathbb C^2 \text{ such that }\\
G(x,y) := Q(x,y)\exp \Big( \lambda_{10} (x-\xG) + \lambda_{01} (y-\yG) \Big) \text{ satisfies }  \\
(-\Delta-\kappa^2(x,y) )G(x,y) = O(|(x,y)-\pG|^q)
\end{array}
\right.
$$
Moreover, defining $\vec{d}:= ( \lambda_{10}, \lambda_{01})$ and $\widetilde \kappa^2:= \left|\vec{d}\right|^2+\kappa^2$, since
$$
\begin{array}{l}
(-\Delta-\kappa^2(x,y) )Q(x,y)\exp ( \lambda_{10} (x-\xG) + \lambda_{01} (y-\yG)) 
\\
=\left(-\Delta Q (x,y) -2 \nabla Q(x,y)\cdot \vec{d}-\widetilde \kappa^2(x,y)Q(x,y) \right)\exp \Big( \lambda_{10} (x-\xG) + \lambda_{01} (y-\yG) \Big) 
\end{array}
$$
while $\exp \Big( \lambda_{10} (x-\xG) + \lambda_{01} (y-\yG) \Big) $ is bounded in the vicinity of $\pG$,  the construction of an amplitude-based GPW is also equivalent to the following problem 
\begin{equation}
\label{pb:AbGPW}
\left\{
\begin{array}{l}
\text{Find } \left(Q, \vec{d}\right)\in\mathbb C[X,Y]\times\mathbb C^2 \text{ such that }\\
-\Delta Q (x,y) -2 \nabla Q(x,y)\cdot \vec{d}-\widetilde \kappa^2(x,y)Q(x,y) = O(|(x,y)-\pG|^q)\\
G(x,y) := Q(x,y)\exp 
\vec{d}\cdot \begin{pmatrix} x-\xG\\y-\yG\end{pmatrix}
\end{array}
\right.
\end{equation}
\begin{rmk}
Comparing \eqref{pb:AbGPW} to \eqref{pb:PbGPW} we observe that the new problem is not linear anymore.
\end{rmk}
However, there is no guarantee that this problem is well-posed, hence Subsections \ref{ssec:Lsys} and \ref{ssec:Layer} turn to the study of well-posedness of this problem.
Next, given both the order $q$ and the center $\pG$ of the Taylor expansion, we will focus on developing an algorithm to construct such a GPW, which is equivalent to computing the coefficients of the polynomial $Q$ so that the coefficients of the Taylor expansion of $-\Delta Q -2 \nabla Q\cdot \vec{d}-\widetilde \kappa^2Q$ are zero, see Subsection \ref{ssec:const}. Beyond the construction of one GPW, the goal is evidently to construct a family of linearly independent GPWs with good interpolation properties, and these aspects will be addressed in Section \ref{sec:int}.



\subsection{A linear system}\label{ssec:Lsys}
In order to underline the structure of the problem at stake, we will now consider  $\vec{d}:= ( \lambda_{10}, \lambda_{01})\in\mathbb C^2$ to be fixed while we identify any polynomial $Q\in\mathbb C[X,Y]$ to the set of its coefficients $\{\mu_{i_x,i_y}, (i_x,i_y)\in\mathbb N_0^2, i_x+i_y\leq \deg Q\}$ via $\displaystyle Q(x,y)=\sum_{0\leq i_x+i_y\leq \deg Q} \mu_{i_x,i_y} (x-\xG)^{i_x} (y-\yG)^{i_y}$. Later $\vec{d}$ will be used to define a family of GPWs. In this case, \eqref{pb:AbGPW} boils down to a linear system:
\begin{itemize}
\item the unknowns are $\{\mu_{i_x,i_y}, (i_x,i_y)\in\mathbb N_0^2, i_x+i_y\leq \deg Q\}$, 
\item the equations are $\left\{\dx^{j_x}\dy^{j_y}\big[ -\Delta Q -2 \nabla Q\cdot \vec{d}-\widetilde \kappa^2Q \big]\pG = 0, (j_x,j_y)\in\mathbb N_0^2, j_x+j_y\leq q-1\right\}$.
\end{itemize}
This linear system therefore has $N_{dof}:=\frac{(\deg Q +1)(\deg Q +2)}{2}$ unknowns and $N_{eqn}:=\frac{q(q +1)}{2}$ equations, and in order to build a solution to Problem \eqref{pb:AbGPW}. For clarity we will consistently use indices $(i_x,i_y)$ to refer to unknowns while indices $(j_x,j_y)$ will refer to equations.

Obviously, choosing the degree of $Q$, $\deg Q$, will affect the well-posedness of this system, as depending on $\deg Q+1$ being smaller than, larger than or equal to $q$ the system is respectively overdetermined, underdetermined or square. 
Similarly to the choice of $\deg P$ for phase-based GPWs, we will choose $\deg Q = q+1$ to exploit the structure provided by the Laplacian, see Remark \ref{rmk:notosneat}.
According to this choice, as $\dx^{j_x}\dy^{j_y} Q \pG  = j_x!j_y!\mu_{j_x,j_y}$ for $j_x+j_y\leq q+1$ and zero otherwise, the linear system can then be written
\begin{equation}
\label{LinSys}
\left\{
\begin{array}{l}
\forall (j_x,j_y)\in\mathbb N_0^2, j_x+j_y\leq q-1,\\
(j_x+2)(j_x+1)\mu_{j_x+2,j_y} + (j_y+2)(j_y+1)\mu_{j_x,j_y+2}
+2(j_x+1)\lambda_{10}\mu_{j_x+1,j_y}+2(j_y+1)\lambda_{01}\mu_{j_x,j_y+1}
\\\displaystyle \phantom{(j_x+2)}
+\sum_{k_x=0}^{j_x}\sum_{k_y=0}^{j_y} \frac{1}{(j_x-k_x)!(j_y-k_y)!}\dx^{j_x-k_x}\dy^{j_y-k_y}\widetilde \kappa^2 \pG \mu_{k_x,k_y} = 0
\end{array}
\right.
\end{equation}
The goal in this section is then to leverage the structure of this linear system in order to build non trivial solutions and hence build corresponding GPWs $\displaystyle Q(x,y)=\sum_{0\leq i_x+i_y\leq q+1} \mu_{i_x,i_y} (x-\xG)^{i_x} (y-\yG)^{i_y}$.

\begin{rmk}
\label{rmk:notosneat}
Note that for any value of  $\deg Q\geq q+1$ the corresponding linear system could be written exactly as \eqref{LinSys}. Therefore, picking $\deg Q>q+1$ would would increase the number of degrees of freedom without affecting the properties of the system. 

On the other hand, if we had chosen $\deg Q<q+1$, at least the equations $(j_x,j_y)$ such that $j_x+j_y=\deg Q-1$ would read
\begin{equation}
\left\{
\begin{array}{l}
\forall (j_x,j_y)\in\mathbb N_0^2, j_x+j_y=\deg Q-1,\\
2(j_x+1)\lambda_{10}\mu_{j_x+1,j_y}+2(j_y+1)\lambda_{01}\mu_{j_x,j_y+1}
\\\displaystyle \phantom{(j_x+2)}
+\sum_{k_x=0}^{j_x}\sum_{k_y=0}^{j_y} \frac{1}{(j_x-k_x)!(j_y-k_y)!}\dx^{j_x-k_x}\dy^{j_y-k_y}\widetilde \kappa^2 \pG \mu_{k_x,k_y} = 0
\end{array}
\right.
\end{equation}
and the upcoming study of the system would not hold.
\end{rmk}

\subsection{Layer structure and well-posedness}\label{ssec:Layer}
Even though there are no non linear terms in the system for amplitude-based GPWs, the structure of the system is analogous to that of the equivalent system for phase-based GPWs. We can gather unknowns $\mu_{i_x,i_y}$ according to the total degree of their monomial in $Q$, that is to say according to $i_x+i_y$.  Likewise, for each value of $\ell$ from $0$ to $q-1$ we can gather equations according to $\ell=j_x+j_y$ into subsystems of $\ell+1$ equations for the unknowns $\{\mu_{i_x,i_y}, (i_x,i_y)\in\mathbb N_0^2, i_x+i_y=\ell+2\}$. Therefore each subsystem can be rewritten as
\begin{equation}
\label{LinSubSys}
\left\{
\begin{array}{l}
\forall (j_x,j_y)\in\mathbb N_0^2, j_x+j_y=\ell,\\
(j_x+2)(j_x+1)\mu_{j_x+2,j_y} + (j_y+2)(j_y+1)\mu_{j_x,j_y+2}\\
=-2(j_x+1)\lambda_{10}\mu_{j_x+1,j_y}-2(j_y+1)\lambda_{01}\mu_{j_x,j_y+1}
\\\displaystyle \phantom{(j_x+2)}
-\sum_{k_x=0}^{j_x}\sum_{k_y=0}^{j_y} \frac{1}{(j_x-k_x)!(j_y-k_y)!}\dx^{j_x-k_x}\dy^{j_y-k_y}\widetilde \kappa^2 \pG \mu_{k_x,k_y} .
\end{array}
\right.
\end{equation}
The right hand side of each equation in \eqref{LinSubSys} only involves unknowns $\{\mu_{i_x,i_y}, (i_x,i_y)\in\mathbb N_0^2, i_x+i_y<\ell+2\}$. The subsystems can then be considered sequentially for increasing values of $\ell$, as the right hand side would then be know from the previous layers $\widetilde \ell <\ell$. Each subsystem consists of $\ell+1$ equations and has $\ell+3$ unknowns, namely $\{\mu_{i_x,i_y}, (i_x,i_y)\in\mathbb N_0^2, i_x+i_y=\ell+2\}$, furthermore the three unknowns $\{\mu_{0,0},\mu_{1,0},\mu_{0,1}  \}$ only appear in the right hand sides of these subsystems.

We can now justify why each subsystem has a solution, thanks to a reformulation of each subsystem in terms of the partial operator operator $\Delta$ defined on the space of homogeneous polynomials.
%
Denote by $\CoPo= \mathbb C[X,Y]$ the space of complex polynomials in two variables, and by $\HoPo{d}\subset \CoPo$ the space of  homogeneous polynomials of degree $d$. Since $\HoPo{d} = Span \{ X^iY^{d-i}, 0\leq i\leq d \}$, it is clear that $\dim \HoPo{d} = d+1$.
For a given level $\ell\in\mathbb N_0$, consider the Laplacian of a homogeneous polynomial of degree $\ell+2$:
\begin{equation}
\label{eq:LpHoPo}
\Delta \left[\sum_{i=0}^{\ell+2} p_i X^iY^{\ell+2-i}\right]
=
\sum_{i=0}^{\ell} \Big((i+2)(i+1) p_{i+2} + (\ell+2-i)(\ell+1-i)p_i\Big) X^iY^{\ell-i}.
\end{equation}
Then the restriction $\Delta_\ell$ of the Laplacian operator to a space of homogeneous polynomials $\HoPo{\ell+2}$ is defined as:
$$
\begin{array}{rccc}
\Delta_\ell: &\HoPo{\ell+2}&\rightarrow &\HoPo{\ell}\\
&P &\mapsto& \Delta P
\end{array}
$$
and we are interested in the range of this linear operator. Indeed, the Subsystem \eqref{LinSubSys} has a solution if and only if the operator $\Delta_\ell$ is surjective.

Let's focus first on the kernel of $\Delta_\ell$. It is clear from \eqref{eq:LpHoPo} that 
$$
\begin{array}{rl}
\ker \Delta_\ell = & \displaystyle
\Bigg\{
\sum_{i=0}^{\ell+2} p_i X^iY^{\ell+2-i}, \{p_i\}_{0\leq i \leq \ell+2}\in\mathbb C^{\ell+3}, \\
&
\phantom{ \Bigg\{ }
(i+2)(i+1) p_{i+2} + (\ell+2-i)(\ell+1-i)p_i = 0
\text{ for } 0\leq i \leq \ell
\Bigg\}
,
\end{array}
$$
which is equivalent to
$$
\begin{array}{rl}
\ker \Delta_\ell = & \displaystyle
Span\Bigg\{
 \sum_{i=0}^{\left\lfloor \frac{\ell+2}{2}\right\rfloor} 
 \frac{(\ell+2)!}{(2i)!(\ell+2-2i)!}
X^{2i}Y^{\ell+2-2i},\\
& \displaystyle
\phantom{ Span\Bigg\{ }
\sum_{i=1}^{\left\lfloor \frac{\ell+3}{2}\right\rfloor} 
 \frac{(\ell+1)!}{(2i-1)!(\ell+3-2i)!}
X^{2i-1}Y^{\ell+3-2i}
\Bigg\}
.
\end{array}
$$
So $\dim( \ker\Delta_\ell )= 2$.
As a consequence, since  $\dim \HoPo{\ell+2} = \ell+3$ while $\dim \HoPo{\ell} = \ell+1$, the rank-nullity theorem shows that the operator $\Delta_\ell$ is full-rank.

This then shows that each subsystem \eqref{LinSubSys} has a solution. 

In turn, thanks to the layer structure of System \eqref{LinSys}, we have then proved that the system has a solution.
With $N_{dof}=\frac{(q+2)(q+3)}{2}$ unknowns and  $N_{eqn}=\frac{q(q+1)}{2}$ equations, System \eqref{LinSys} can be solved using $N_{dof}-N_{eqn}=2q+3$ appropriate additional constraints. First, the three unknowns $\{\mu_{0,0},\mu_{1,0},\mu_{0,1}  \}$ can be fixed, as we have already noted that they appear only on right hand sides of the subsystems. 
Next, for increasing values of $\ell$, each of the $q$ subsystem is triangular. This can be evidenced via numbering of the equations with increasing values of $j_x$ and numbering of the unknowns with increasing values of $i_x$. It is then clear that by adding the two additional constraints corresponding to fixing $\mu_{0,\ell +2}$ and $\mu_{1,\ell+1}$ we obtain a well-posed problem for each layer $\ell$. The three initial constraints plus the two constraints per subsystem altogether form $2q+3$ additional constraints, and there is a unique solution to \eqref{LinSys} augmented by these constraints. 

  In summary, the problem of constructing a GPW can then be reformulated into a well-posed problem:
  \begin{equation}
  \label{pb:reform}
  \left\{
  \begin{array}{l}
  \text{Fix }  \{\mu_{0,0},\mu_{1,0},\mu_{0,1}  \}, \\
  \begin{array}{rl}
    \forall \ell \in [\!\![ 0,q-1]\!\!]&
   \text{Fix }  \mu_{0,\ell +2},\mu_{1,\ell+1} ,\\
   &\text{Solve \eqref{LinSubSys}}.
  \end{array}
  \end{array}
  \right.
  \end{equation}

\subsection{Construction of solutions}\label{ssec:const}
The well-posedness of \eqref{pb:reform} -- and therefore the existence of a solution to \eqref{pb:AbGPW} -- is independent of the values chosen to fix the additional constraints. However, in order for the resulting GPW to have useful interpolation properties, we make the following choices.
To obtain a GPW of the form $G(x,y) = (1+ HOT)\exp\Big( \lambda_{10} (x-\xG) + \lambda_{01} (y-\yG) \Big)$ as announced, we choose to fix $\mu_{0,0}=1$, as any choice of a non-zero value independent of  $( \lambda_{10},\lambda_{01})$ would not affect any of the results that follow. Next we choose for simplicity to fix $\mu_{1,0}=0$ and $\mu_{0,1}=0$, and,  for increasing values of $\ell$,  $\mu_{0,\ell+2}=0$ and $\mu_{\ell+2,0}=0$.
The unique solution to the reformulated problem \eqref{pb:reform} can be constructed thanks to the following algorithm.
 
\begin{algorithm}[H] 
\caption{Construction of an amplitude-based GPW for the Helmholtz equation}
\label{Algo:AbGPW}
\begin{algorithmic}[1]
\State Given $\vec{d}=(\lambda_{1,0},\lambda_{0,1})\in\mathbb C^2$
and
$\pG\in\mathbb R^2$
\State 
\label{Base}
Fix $\mu_{0,0}=1$ as well as $(\mu_{1,0},\mu_{0,1})=(0,0)$
\For{$\ell \gets 0,\, q-1$}
\State Fix $\mu_{0,\ell +2}=0$ and $\mu_{1,\ell+1}=0$
\For{$j_x \gets 0,\, \ell$}
\State \label{eq:RHS}
$\mathsf{RHS}:=-2(j_x+1)\lambda_{10}\mu_{j_x+1,\ell-j_x} - 2 (\ell-j_x+1) \lambda_{01} \mu_{j_x,\ell-j_x+1}$
\State $\displaystyle \qquad \qquad -\sum_{k_x=0}^{j_x}\sum_{k_y=0}^{\ell-j_x} \frac{1}{(j_x-k_x)!(\ell-j_x-k_y)!}\dx^{j_x-k_x}\dy^{\ell-j_x-k_y}\widetilde \kappa^2 \pG \mu_{k_x,k_y}$
\State \label{inductionform}
$\displaystyle \mu_{j_x+2,\ell-j_x} := \frac{1}{(j_x+2)(j_x+1)}\Big(\mathsf {RHS}- (\ell-j_x+2)(\ell-j_x+1) \mu_{j_x,\ell-j_x+2}\Big)$ 
\EndFor
\EndFor
\State $\displaystyle Q(x,y) \gets \sum_{0\leq i_x+i_y\leq q+1} \mu_{i_x,i_y}(x-\xG)^{i_x}(y-\yG)^{i_y}$
\State $\displaystyle G_{\overrightarrow d}(x,y) \gets Q(x,y)\exp \Big( \lambda_{10} (x-\xG) + \lambda_{01} (y-\yG) \Big)$
\end{algorithmic}
\end{algorithm}
Thanks to the triangular structure of subsystems \eqref{LinSubSys}, which hence are solved by substitution, 
the computational cost of Algorithm \ref{Algo:AbGPW} is simply linear with respect to the number of polynomial coefficients of $Q$. Moreover, even though it does not affect the linear rate, the choice to fix to zero $\mu_{1,0}$, $\mu_{0,1}$, $\mu_{0,\ell+2}$ and $\mu_{\ell+2,0}$ further reduces the computational cost.

From the derivation of this algorithm we immediately obtain the following result.
\begin{prop}
\label{prop:SatisTE}
\kappqAss
 An amplitude-based GPW $G_{\vec d}$ constructed from Algorithm \ref{Algo:AbGPW} for any $\vec{d}=(\lambda_{1,0},\lambda_{0,1})\in\mathbb C^2$ satisfies the local approximation property in the vicinity of $\pG$
\begin{equation}
\label{eq:TaylApp}
(-\Delta-\kappa^2(x,y) )G_{\vec d}(x,y) = O(|(x,y)-\pG|^q).
\end{equation}
\end{prop}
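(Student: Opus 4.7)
The plan is to verify directly that the output of Algorithm \ref{Algo:AbGPW} solves the equivalent problem (\ref{pb:AbGPW}), and then transfer the polynomial approximation property through the multiplicative exponential factor, which is bounded near $\pG$.

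First, I would observe that the inner lines \ref{eq:RHS}--\ref{inductionform} of the algorithm are nothing but the $(j_x,\ell-j_x)$-th equation of the subsystem \eqref{LinSubSys} rearranged to isolate the unknown $\mu_{j_x+2,\ell-j_x}$. Hence, by induction on $\ell$ from $0$ to $q-1$, the coefficients $\{\mu_{i_x,i_y}\}_{i_x+i_y\le q+1}$ returned by the algorithm satisfy every equation of system \eqref{LinSys}. The initializations $\mu_{0,0}=1$, $\mu_{1,0}=\mu_{0,1}=0$, $\mu_{0,\ell+2}=\mu_{\ell+2,0}=0$ serve only to select the unique solution among the feasible ones described in the reformulated problem \eqref{pb:reform}; they do not affect the fact that \eqref{LinSys} is satisfied.

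Second, I would translate the algebraic statement back into a statement about partial derivatives of the residual. Because $Q$ is a polynomial centered at $\pG$, the identities $\dx^{j_x}\dy^{j_y} Q(\pG) = j_x!\, j_y!\, \mu_{j_x,j_y}$ and a direct application of the Leibniz rule to $\widetilde\kappa^2 Q$ show that \eqref{LinSys} is exactly the statement
\[
\dx^{j_x}\dy^{j_y}\Bigl[-\Delta Q - 2\nabla Q\cdot\vec{d} - \widetilde\kappa^2 Q\Bigr]\!\pG = 0, \qquad j_x+j_y \le q-1.
\]
Since $\kappa^2$ is of class $\mathcal{C}^{q-1}$ near $\pG$, so is $\widetilde\kappa^2 = |\vec{d}|^2 + \kappa^2$, and the residual $F := -\Delta Q - 2\nabla Q\cdot\vec{d} - \widetilde\kappa^2 Q$ is therefore $\mathcal{C}^{q-1}$ as well. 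Taylor's theorem then yields $F(x,y) = O(|(x,y)-\pG|^q)$.

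Finally, the factorization already exhibited in Section \ref{ssec:Lsys} (between equations \eqref{pb:AbGPW} and the paragraph following it) gives
\[
(-\Delta - \kappa^2(x,y))\,G_{\vec d}(x,y) = F(x,y)\,\exp\bigl(\lambda_{1,0}(x-\xG) + \lambda_{0,1}(y-\yG)\bigr),
\]
and since the exponential factor is continuous and hence bounded on any fixed neighborhood of $\pG$, the estimate \eqref{eq:TaylApp} follows at once. The main obstacle is simply the careful bookkeeping between the three equivalent formulations (the algorithmic update, the linear system on the coefficients $\mu_{i_x,i_y}$, and the vanishing of derivatives of the residual); by design of the ansatz and of the algorithm, no deeper argument is required.
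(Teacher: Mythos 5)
Your proof is correct and makes explicit the chain of reasoning the paper summarizes as holding ``by construction'': the algorithm's inner loop solves each equation of subsystem \eqref{LinSubSys}, which is equivalent by the Leibniz rule to the vanishing of the residual's Taylor coefficients up to order $q-1$, and factoring out the bounded exponential yields the $O$-bound on $(-\Delta-\kappa^2)G_{\vec d}$. One minor slip worth noting: Algorithm~\ref{Algo:AbGPW} fixes $\mu_{0,\ell+2}=0$ and $\mu_{1,\ell+1}=0$ (the two starting values of the even/odd triangular recursion), not $\mu_{\ell+2,0}=0$ as you wrote (a typo also present in the paper's prose in Subsection~\ref{ssec:const}), but as you correctly observe this particular choice does not affect the fact that \eqref{LinSys} is satisfied.
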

This property actually holds not only independently of the choice of $\vec{d}=(\lambda_{1,0},\lambda_{0,1})\in\mathbb C^2$, but also independently of the other values fixed in the Algorithm, since \eqref{eq:TaylApp} holds simply by construction.
 The particular choice of $\vec d$ that will now be proposed is however the corner stone of the proofs leading to interpolation properties of the new GPWs.

The choice of the $\vec d$, related to the direction of propagation of PWs, is referred to as normalization. In order to build a family of amplitude-based GPWs, we simply mimic classical PW by choosing 
$(\lambda_{1,0},\lambda_{0,1})= \sqrt{-\widetilde\kappa^2\pG} (\cos \theta,\sin\theta)$, and construct the corresponding GPW for any value of $\theta$.

\begin{dfn}
\label{def:GPWsp}
\kappqAssnz
For any set of $p$ angles $\mathrm A:=\{\theta_k \in [0,2\pi); 1\leq k \leq p\}$, 
for each angle we define the associated $\vec{d}_k:= \sqrt{-\kappa^2\pG} (\cos \theta_k,\sin\theta_k)$ and GPW $G_k :=G_{\vec{d}_k}$. We will denote the corresponding set of GPW functions $\mathcal B_{\mathrm A,q}:=\{G_k;\theta_k\in\mathrm A, 1\leq k\leq p\} $.
\end{dfn}

As a byproduct of the proof of interpolation properties, we will prove the linear independence of this set of GPWs under the mere condition that $\mathrm A$ is a set of distinct angles.

Obviously, if $\kappa^2\pG=0$, then all $G_k$ are identical, because then all $\vec{d}_k$ are the same. However, in this case, the normalization could be chosen differently, and a family of linearly independent GPWs would still be constructed. We will not focus on this aspect here, but a study of interpolation property with a normalization independent of $\kappa$ was presented in \cite{IGS} for phase-based GPWs, and the same approach is applicable to the amplitude-based GPWs. Interpolation properties of these new functions indeed hold even with a $\kappa$-independent normalization, so the new GPWs could be used (like the phase-based ones) in domains including points at which $\kappa$ vanishes.
For instance for applications to plasma physics where a cut-off can be defined as a line along which $\kappa$ vanishes, it would be possible to change the normalization in a neighborhood of a cut-off.

\section{Interpolation properties}
\label{sec:int}
We will now follow the roadmap proposed in \cite{IGS} to study interpolation properties of GPWs, adapting each step to this new framework of amplitude-based GPWs. At each step before the last one, we will emphasize interpretation of the central idea in this new framework and state the desired properties in Lemmas, finally resulting in the interpolation properties summarized in Theorem \ref{thm:AbInterp}. The theorem's proof relies on constructing a GPW approximation to any smooth solution of the PDE by matching their Taylor expansions. A natural element coming into play when matching the Taylor expansion of a linear combination of $p$ functions $\{f_i,i\in\mathbb N, i\leq p\}$ is the matrix $\mathsf M_n\in\mathbb C^{(n+1)(n+2)/2\times p}$ built column-wise from the Taylor expansion coefficients of each function, and we will use the notation
\begin{equation}
\label{eq:DefMat}
\forall (j_x,j_y)\in(\mathbb N_0)^2, j_x+j_y\leq n,\quad
(\mathsf M_n)_{\frac{(j_x+j_y)(j_x+j_y+1)}{2}+j_y+1,k} := \dx^{j_x} \dy^{j_y} f_k \pG/(j_x!j_y!).
\end{equation}

\subsection*{ Step 1} 
In this first step, we seek common properties of the unknowns that are computed inside the nested loop in Algorithm \ref{Algo:AbGPW}. In the phase-based context, this amounts to studying the coefficients of highest degree terms in the polynomial $P$ with respect to the only non-zero fixed unknowns in the construction algorithm, namely $(\lambda_{1,0},\lambda_{0,1})$. So it involves exclusively on the phase polynomial. By contrast, here, it will couple the amplitude polynomial with the phase term. The coefficients of highest degree terms in the polynomial $Q$ can't possibly be expressed exclusively in terms of the only non-zero fixed unknown in the construction algorithm, which would be $\mu_{0,0}$. Instead, as appears clearly from the computation of $\mathsf{ RHS}$ in the algorithm, they are intrinsically coupled to the fixed terms from the phase, namely $(\lambda_{1,0},\lambda_{0,1})$. Despite this practical difference, the fundamental idea remains the same independently of the choice of ansatz for the GPW: both ansatz are designed starting from a classical plane wave, so the new unknowns introduced in each ansatz are studied with respect to the parameters defining a classical plane wave, namely $(\lambda_{1,0},\lambda_{0,1})$.

The relation between $\{ \mu_{i_x,i_y}, i_x\geq 2 \}$ and $(\lambda_{0,1},\lambda_{1,0})$ is polynomial, and each of these $\mu_{i_x,i_y}$ has a particular degree as a polynomial in $\mathbb C[\lambda_{1,0},\lambda_{0,1}]$.

\begin{lmm}
\label{lmm:mus}
\kappqAssnz
\muAss
 under the assumption (inspired by classical Plane Waves) that the quantity $ \left( \lambda_{1,0} \right)^2 + \left( \lambda_{0,1} \right)^2$ is 
 equal to $-\kappa^2\pG$.
While $\mu_{0,0}=1$, each $\mu_{i_x,i_y}$ for $i_x+i_y\geq 1$ can be expressed as a polynomial in $\mathbb C[\lambda_{1,0},\lambda_{0,1}]$ of degree at most equal to $i_x+i_y-2$.
\end{lmm}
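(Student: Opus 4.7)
The plan is to proceed by strong induction on the total index $n:=i_x+i_y$, exactly mirroring the layered structure of Algorithm \ref{Algo:AbGPW}. The crucial consequence of the hypothesis $\lambda_{1,0}^2+\lambda_{0,1}^2=-\kappa^2\pG$ is that $\widetilde\kappa^2\pG=0$: this single cancellation eliminates the one contribution in the recursion that would otherwise push the degree of $\mu_{i_x,i_y}$ up to $i_x+i_y$, and without it the Lemma's bound would fail.

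The base cases are immediate: $\mu_{0,0}=1$ is a nonzero constant, while $\mu_{1,0}=\mu_{0,1}=0$ cover $n=1$ with the zero polynomial. For the inductive step at layer $\ell=n-2\geq 0$, the two fixed coefficients $\mu_{0,n}=0$ and $\mu_{1,n-1}=0$ trivially satisfy the claim, and the remaining $\mu_{j_x+2,n-2-j_x}$ ($j_x=0,\ldots,n-2$) are produced by the recursion on line \ref{inductionform}. I would bound each contribution to $\mathsf{RHS}$ separately: the two explicit $\lambda$-terms each multiply $\lambda_{1,0}$ or $\lambda_{0,1}$ by a $\mu$ of total index $n-1$, which by the outer hypothesis has degree $\leq n-3$, giving a product of degree $\leq n-2$. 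In the double sum I would isolate the single term with $(k_x,k_y)=(j_x,n-2-j_x)$: its coefficient is $\widetilde\kappa^2\pG$ times $\mu_{j_x,n-2-j_x}$, which vanishes thanks to the hypothesis. Every other term has $k_x+k_y\leq n-3$ and is multiplied by a purely numerical derivative $\partial_x^a\partial_y^b\widetilde\kappa^2\pG=\partial_x^a\partial_y^b\kappa^2\pG$ with $(a,b)\neq(0,0)$, so by the outer induction those contributions have degree at most $(n-3)-2=n-5$, comfortably within the bound.

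The last thing to handle is the cross-layer term $(n-j_x)(n-1-j_x)\mu_{j_x,n-j_x}$ appearing in the update, where $\mu_{j_x,n-j_x}$ lives in the same layer as the unknown being computed. For $j_x\in\{0,1\}$ this is one of the fixed-to-zero values, and for $j_x\geq 2$ it was produced two iterations earlier in the same inner loop. A nested induction on $j_x$, splitting naturally into the even and odd subchains linked by the step-two structure of the recursion, closes the argument. The main obstacle is precisely this double-layered bookkeeping: one has to verify that the two fixed interior values $\mu_{0,n}=\mu_{1,n-1}=0$ are exactly what is needed to seed both subchains of the inner induction, and that the hypothesis on $(\lambda_{1,0},\lambda_{0,1})$ enters only through the vanishing of $\widetilde\kappa^2\pG$, making it clear that no additional structural property of the normalization is being used.
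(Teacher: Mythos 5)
Your proposal is correct and follows essentially the same route as the paper: a nested induction on the layer index and then on $j_x$, with the key observations being that the normalization kills the $\widetilde\kappa^2\pG\,\mu_{j_x,n-2-j_x}$ term and that the fixed coefficients $\mu_{0,n}=\mu_{1,n-1}=0$ seed the step-two intra-layer recursion handled by the inner induction.

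Two minor imprecisions are worth flagging. First, the bound ``degree at most $(n-3)-2=n-5$'' for the remaining terms of the double sum does not apply to the $(k_x,k_y)=(0,0)$ term, since $\mu_{0,0}=1$ has degree $0$, not $-2$; the paper lists this contribution separately, and the conclusion still holds because $0\leq n-2$. Second, ``without it the Lemma's bound would fail'' overstates the role of the cancellation $\widetilde\kappa^2\pG=0$: as the paper notes in the remark immediately following the Lemma, what is actually needed is only that $\lambda_{1,0}^2+\lambda_{0,1}^2$ be a fixed constant $v$ (one works in the quotient ring $\mathbb C[\lambda_{1,0},\lambda_{0,1}]/\bigl(\lambda_{1,0}^2+\lambda_{0,1}^2-v\bigr)$), so that $\widetilde\kappa^2\pG$ is degree $0$ rather than degree $2$; the specific choice $v=-\kappa^2\pG$, and hence the vanishing of $\widetilde\kappa^2\pG$, is a convenience, not a necessity.
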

In order for the following proof to hold, it is sufficient for the quantity $ \left( \lambda_{1,0} \right)^2 + \left( \lambda_{0,1} \right)^2$ to have a fixed value in $v\in\mathbb C$. In other words, as explained in \cite{LMinterp}, instead of considering elements of the polynomial ring $\mathbb C[\lambda_{1,0},\lambda_{0,1}]$, we instead consider polynomials of the quotient ring $\mathbb C[\lambda_{1,0},\lambda_{0,1}]/\left(\left( \lambda_{1,0} \right)^2 + \left( \lambda_{0,1} \right)^2-v\right)$. This explains the phrasing of the Lemma: $\mu_{i_x,i_y}$  {\it can be expressed as} a polynomial with a certain degree -- as opposed to {\it has} a certain degree.

\begin{proof}
In view of Formula (\ref{Base}:) from Algorithm  \ref{Algo:AbGPW}, the result is clear for $(\mu_{1,0},\mu_{0,1})$.
In view of Formula (\ref{inductionform}:) from Algorithm \ref{Algo:AbGPW}, we will further proceed by nested induction on $\ell$ and $j_x$.  Following the layer structure of our problem, we start by the induction with respect to $\ell$.

For $\ell=0$, $\mu_{0,2}$ and $\mu_{1,1}$ are both fixed to zero so they clearly are polynomials in $\mathbb C[\lambda_{1,0},\lambda_{0,1}]$ of degree at most equal to $0$. Then since $\mu_{0,0}=1$ the last $\mu_{i_x,i_y}$ with $i_x+i_y=2$ can be written as 
\begin{equation}
\label{eq:mu20}
\mu_{2,0} 
=\frac12\left( -2\lambda_{1,0}\mu_{1,0} -2\lambda_{0,1}\mu_{0,1} - \widetilde \kappa^2\pG 
\right)
\end{equation}
From the definition of $\widetilde\kappa^2$ and the assumption  on $(\lambda_{1,0},\lambda_{0,1})$, it is clear  that $\widetilde \kappa^2\pG = 0$. Since moreover $\mu_{1,0}=\mu_{0,1}=0$, we obtain that $\mu_{2,0} =0$ and the result is proved for $\ell=0$. 

Given $\ell\in\mathbb N_0$, $\ell<q-1$,  assume that each $\mu_{i_x,i_y}$ for $1\leq i_x+i_y\leq \ell+2$ is a polynomial in $\mathbb C[\lambda_{1,0},\lambda_{0,1}]$ of degree at most equal to $i_x+i_y-2$.  Since $\mu_{0,\ell+3}$ and $\mu_{1,\ell+2}$ are both fixed to zero, they clearly are polynomials in $\mathbb C[\lambda_{1,0},\lambda_{0,1}]$ of degree at most equal to $\ell+1$. We then want to prove the result for all $\mu_{j_x+2,\ell+1-j_x}$ with $0\leq j_x\leq \ell+1$, and we will naturally proceed by induction on $j_x$.
 For $j_x=0$, since $\mu_{0,\ell+3}=0$, (\ref{inductionform}:) shows that $ \mu_{2,\ell+1}$ as a polynomial in $\mathbb C[\lambda_{1,0},\lambda_{0,1}]$ will simply be a multiple of $\mathsf{RHS}$, while thanks to the definition of $\widetilde \kappa^2$ with $\mu_{0,\ell+1}=\mu_{1,\ell+1}=\mu_{0,\ell+2} =0$ we have
 $$
\mathsf{RHS} =
 -\sum_{k_y=0}^{\ell+1} \frac1{(\ell+1-k_y)!} \dy^{\ell+1-k_y}  \kappa^2\pG \mu_{0,k_y}.
 $$
 So the induction hypothesis for $\ell$ together with $\mu_{0,0}=1$ show that the result holds for $ \mu_{2,\ell+1}$.
 Given  $j_x\in\mathbb N_0$, $j_x<\ell+1$, we now assume that each $\mu_{i_x,i_y}$ for $i_x+i_y=\ell+3$ as well as $i_x\leq j_x+2$
 is a polynomial in $\mathbb C[\lambda_{1,0},\lambda_{0,1}]$ of degree at most equal to $i_x+i_y-2$. The degree of $\mu_{j_x+3,\ell-j_x}$ at most equal to the maximum of:
 \begin{itemize}
 \item the degree of $\mu_{j_x+1,\ell-j_x+2}$, at most equal to $\ell+1$ from the induction hypothesis for $j_x$,
 \item the degree of $\lambda_{10}\mu_{j_x+2,\ell-j_x}$, at most equal to $\ell+1$ from the induction hypothesis for $\ell$,
 \item the degree of $\lambda_{01} \mu_{j_x+1,\ell-j_x+1}$, at most equal to $\ell+1$ from the induction hypothesis for $\ell$,
 \item the degree of $\left( \left( \lambda_{1,0} \right)^2 + \left( \lambda_{0,1} \right)^2\right)\mu_{j_x+1,\ell-j_x}$, at most equal to $\ell-1$ from the induction hypothesis for $\ell$,
 \item the degree of $\dx^{j_x+1}\dy^{\ell-j_x}\kappa^2 \pG \mu_{0,0}$, at most equal to $0$ since $\mu_{0,0}=1$,
 \item the degree of $\dx^{j_x+1-k_x}\dy^{\ell-j_x-k_y}\kappa^2 \pG \mu_{k_x,k_y}$ for $0\leq k_x\leq j_x+1$ and $0\leq k_y\leq \ell-k_x$, at most equal to $\ell-1$ from the induction hypothesis for $\ell$.
 \end{itemize}
So the degree of $\mu_{j_x+3,\ell-j_x}$ as a polynomial in $\mathbb C[\lambda_{1,0},\lambda_{0,1}]$ is as expected at most equal to $\ell+1$, which concludes the proof.
\end{proof}

\subsection*{ Step 2} 
This step focuses on identifying a reference set of functions, simpler to study than the GPWs.
From the choice of ansatz for amplitude-based GPWs, it is again clear that the reference case of classical PWs will play a fundamental role in the study of interpolation properties, similarly to the phase-based GPW case \cite{LMinterp}.

The normalization introduced before Definition \ref{def:GPWsp} is used to define the reference space of classical Plane Waves corresponding to the GPWs as follows.
\begin{dfn}
\label{def:cPWsp}
Assume that the point $\pG\in\mathbb R^2$ as well as the parameter $p$ are given.
For any set of $p$ angles $\mathrm A:=\{\theta_k \in [0,2\pi); 1\leq k \leq p\}$, 
for each angle we define the associated $\vec{d}_k:= \sqrt{-\kappa^2\pG} (\cos \theta_k,\sin\theta_k)$ and classical PW $H_k :=\exp \Big({\vec{d}_k} \cdot (\boldsymbol{\cdot}- \pG)\Big)$. We will denote the corresponding set of PW functions $\mathcal B^{ref}_{\mathrm A}:=\{H_k;\theta_k\in\mathrm A, 1\leq k\leq p\} $.
\end{dfn}
Even though as discussed earlier we will assume that $\kappa^2\pG\neq 0$, the following study will hold independently of the sign of $\kappa^2\pG$, however the $H_k$ functions are oscillating plane waves only if $\kappa^2\pG>0$, while they are exponentially decaying (or increasing) functions if  $\kappa^2\pG<0$.

\subsection*{ Step 3} 
Let's turn to the study of interpolation properties of this reference case. Here the reference case is the same for the amplitude-based GPWs as it was for the phase-based GPWs. The interpolation properties of the reference space $\mathcal B^{ref}_{\mathrm A}$ were proved in \cite{cd98}, even though they were not stated as a stand-alone result. The precise result that we will use in the following is only a part of that proof. It is simply presented here as a reminder.
%
%
\begin{lmm}
\label{lmm:rkMnC}
Assume that the point $\pG\in\mathbb R^2$ as well as the parameters $n\in\mathbb N$ are given.
For any set of $p=2n+1$ distinct angles $\mathrm A:=\{\theta_k \in [0,2\pi); 1\leq k \leq p\}$, 
we consider the matrix \eqref{eq:DefMat} for the reference set of PW $\mathcal B^{ref}_{\mathrm A}$,  denoted $\mathsf M^{C}_n$ as in \cite{LMinterp}.
Then as long as $\kappa^2\pG\neq 0$ the rank of this matrix is $rk\left(\mathsf M^{C}_n\right) = 2n+1$.
\end{lmm}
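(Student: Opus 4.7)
The plan is to show that the $2n+1$ columns of $\mathsf M^C_n$ are linearly independent; since by definition column $k$ is the vector of Taylor coefficients of $H_k$ up to order $n$ at $\pG$, this is equivalent to the statement that the degree-$n$ Taylor polynomials at $\pG$ of the functions $H_1,\dots,H_{2n+1}$ are linearly independent in $\mathbb C[X,Y]$. Suppose $\sum_{k=1}^{2n+1} c_k H_k$ has vanishing Taylor polynomial of degree $n$ at $\pG$. Reading off the $j$-th homogeneous component, this is equivalent to
$$\sum_{k=1}^{2n+1} c_k \bigl(\vec d_k\cdot(X,Y)\bigr)^j = 0 \quad\text{in }\mathbb C[X,Y],\qquad 0\le j\le n.$$
Since $\kappa^2\pG\ne 0$, the common factor $\bigl(\sqrt{-\kappa^2\pG}\bigr)^j$ can be divided out, reducing each identity to $\sum_k c_k(\cos\theta_k X+\sin\theta_k Y)^j=0$.

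Next I would switch to complex coordinates $Z=X+\mathrm i Y$, $\bar Z=X-\mathrm i Y$, which diagonalize the linear form via $\cos\theta\,X+\sin\theta\,Y=\tfrac12(e^{-\mathrm i\theta}Z+e^{\mathrm i\theta}\bar Z)$. Expanding by the binomial theorem and reading off the coefficient of each monomial $Z^m\bar Z^{j-m}$ converts each polynomial identity into a family of scalar conditions
$$\sum_{k=1}^{2n+1} c_k\, e^{\mathrm i(j-2m)\theta_k}=0, \qquad 0\le m\le j,$$
indexed by the integer exponent $\ell:=j-2m$.

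The crux is then a parity and counting argument: for $j=n$ the exponents $\ell$ cover $\{-n,-n+2,\dots,n\}$, and for $j=n-1$ they cover $\{-n+1,-n+3,\dots,n-1\}$, so together they exhaust the $2n+1$ consecutive integers $\{-n,-n+1,\dots,n\}$. This produces exactly $2n+1$ linear conditions on the $2n+1$ unknowns $c_k$, and after pulling out the factor $e^{-\mathrm i n\theta_k}$ from each column the coefficient matrix becomes a square Vandermonde matrix in the variables $e^{\mathrm i\theta_k}$. Since the $\theta_k$ are distinct in $[0,2\pi)$, so are the $e^{\mathrm i\theta_k}$, the Vandermonde determinant is non-zero, and every $c_k$ vanishes.

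The key observation that makes the argument work is that the two highest-order homogeneous components $j=n$ and $j=n-1$ alone already produce all $2n+1$ Fourier exponents needed for the Vandermonde step; this is precisely why the specific dimension $p=2n+1$ appears in the statement. The hypothesis $\kappa^2\pG\ne 0$ is used exactly once, to justify cancelling the common radial factor $\bigl(\sqrt{-\kappa^2\pG}\bigr)^j$ and reducing to a pure angular problem in the $\theta_k$.
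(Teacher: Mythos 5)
Your proof is correct. Note first that the paper does not give its own proof of this lemma: it explicitly defers to the argument buried inside Cessenat--Despr\'es \cite{cd98} and presents the statement only as a reminder, with a pointer to \cite{IGS} for a discussion of why $p=2n+1$ is the right count. Your reconstruction is essentially the canonical argument behind that citation, and all its steps check out: the identification of columns with degree-$n$ Taylor polynomials, the reduction to homogeneous identities $\sum_k c_k(\cos\theta_k X+\sin\theta_k Y)^j=0$ after cancelling the factor $\bigl(\sqrt{-\kappa^2\pG}\,\bigr)^j$ (the only place $\kappa^2\pG\neq 0$ is used), the passage to $Z=X+\mathrm i Y$, $\bar Z=X-\mathrm i Y$ with $\cos\theta\,X+\sin\theta\,Y=\tfrac12\bigl(e^{-\mathrm i\theta}Z+e^{\mathrm i\theta}\bar Z\bigr)$, and the extraction of the scalar conditions $\sum_k c_k e^{\mathrm i(j-2m)\theta_k}=0$ from the coefficients of $Z^m\bar Z^{j-m}$. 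The parity observation that $j=n$ and $j=n-1$ together yield the full range of exponents $\ell\in\{-n,\dots,n\}$, hence a $(2n+1)\times(2n+1)$ system that (after factoring $e^{-\mathrm i n\theta_k}$ from each column) is Vandermonde in the distinct nodes $e^{\mathrm i\theta_k}$, is exactly the point the paper alludes to when it mentions \emph{properties of trigonometric functions} and the necessity of $p\geq 2n+1$. One minor cosmetic remark: you implicitly use that $Z$ and $\bar Z$ are algebraically independent coordinates on $\mathbb C[X,Y]\cong\mathbb C[Z,\bar Z]$ so that coefficients of distinct monomials $Z^m\bar Z^{j-m}$ may be matched individually; this is standard but worth a word. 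Also note that the lower homogeneous orders $j\leq n-2$ give conditions that are redundant with the Vandermonde system you already assembled, which is consistent with the matrix $\mathsf M^C_n$ being of maximal rank $2n+1$ rather than square and invertible.
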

See Section 4.1 in \cite{IGS} for a comment on the need for $p$ to be at least equal to $2n+1$ to guarantee a rank equal to $2n+1$, in relation to properties of trigonometric functions.

\subsection*{ Step 4} 
In order to relate the new GPW case to the reference case, we introduce the matrix \eqref{eq:DefMat} for the new GPW set $\mathcal B_{\mathrm A,q}$,  denoted $\mathsf M^{G}_n$. To study its relation to the reference matrix $\mathsf M^{C}_n$, we will first express each of its entries, the new basis functions' derivatives evaluated at $\pG$ in terms of the reference basis functions' derivatives evaluated at $\pG$.

\begin{lmm}
\label{lmm:ders}
Assume that the point $\pG\in\mathbb R^2$ as well as the parameters $(p,q)\in\mathbb N^2$ are given and that $\kappa^2$ is a function of class $\mathcal C^{q-1}$ in a neighborhood of $\pG$.
For any set of $p$ distinct angles $\mathrm A:=\{\theta_k \in [0,2\pi); 1\leq k \leq p\}$, ranked in a given order, we consider the reference and GPW bases $\mathcal B^{ref}_{\mathrm A}$ and $\mathcal B_{\mathrm A,q}$, with their elements ranked in the same order.
For $(j_x,j_y)\in\mathbb N_0^2$ such that $j_x+j_y\leq q+1 $, there exists a polynomial $R_{(j_x,j_y)}\in\mathbb C[\lambda_{1,0},\lambda_{0,1}]$ with $\deg R_{(j_x,j_y)}<j_x+j_y$ such that for all $k\in\mathbb N$ with $k\leq p$ we have:
\begin{equation*}
\frac{1}{j_x!j_y!}\dx^{j_x} \dy^{j_y} G_k \pG
=
\frac{1}{j_x!j_y!}\dx^{j_x} \dy^{j_y}  H_k \pG
+
R_{(j_x,j_y)}(\lambda_{1,0},\lambda_{0,1}).
\end{equation*}
\end{lmm}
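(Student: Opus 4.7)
The plan is to expand the derivatives of $G_k = Q_k H_k$ via the Leibniz rule, pull out the reference term, and bound the degree of the remainder using Lemma \ref{lmm:mus}.

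First, I would apply the generalized Leibniz rule to $G_k(x,y) = Q_k(x,y)H_k(x,y)$ and evaluate at $\pG$. Since the polynomial $Q_k$ has the explicit expansion $Q_k(x,y) = \sum \mu_{i_x,i_y}(x-\xG)^{i_x}(y-\yG)^{i_y}$, we have $\dx^{a}\dy^{b} Q_k\pG = a!\,b!\,\mu_{a,b}$ for $a+b\leq q+1$ and zero otherwise. Likewise $\dx^{a}\dy^{b} H_k\pG = \lambda_{1,0}^{a}\lambda_{0,1}^{b}$. Combining these gives, for $j_x+j_y\leq q+1$,
\begin{equation*}
\frac{1}{j_x!j_y!}\dx^{j_x}\dy^{j_y} G_k\pG
= \sum_{a=0}^{j_x}\sum_{b=0}^{j_y}\frac{\mu_{a,b}\,\lambda_{1,0}^{j_x-a}\lambda_{0,1}^{j_y-b}}{(j_x-a)!(j_y-b)!}.
\end{equation*}

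Next, I would isolate the $(a,b)=(0,0)$ contribution. Since $\mu_{0,0}=1$ by line \ref{Base} of Algorithm \ref{Algo:AbGPW}, this contribution is exactly $\lambda_{1,0}^{j_x}\lambda_{0,1}^{j_y}/(j_x!j_y!) = \frac{1}{j_x!j_y!}\dx^{j_x}\dy^{j_y} H_k\pG$. The remaining sum is then the natural candidate for the polynomial $R_{(j_x,j_y)}$:
\begin{equation*}
R_{(j_x,j_y)}(\lambda_{1,0},\lambda_{0,1}) := \sum_{\substack{0\leq a\leq j_x,\,0\leq b\leq j_y\\ (a,b)\neq(0,0)}} \frac{\mu_{a,b}\,\lambda_{1,0}^{j_x-a}\lambda_{0,1}^{j_y-b}}{(j_x-a)!(j_y-b)!}.
\end{equation*}
Note that all indices $(a,b)$ appearing here satisfy $a+b\leq j_x+j_y\leq q+1$, so the Taylor coefficients $\mu_{a,b}$ are indeed among those produced by Algorithm \ref{Algo:AbGPW}.

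Finally I would apply Lemma \ref{lmm:mus} to each term of $R_{(j_x,j_y)}$: for every $(a,b)\neq(0,0)$ the coefficient $\mu_{a,b}$ can be expressed as a polynomial in $\mathbb C[\lambda_{1,0},\lambda_{0,1}]$ of degree at most $a+b-2$. Multiplying by the monomial $\lambda_{1,0}^{j_x-a}\lambda_{0,1}^{j_y-b}$ therefore yields a polynomial of degree at most $(a+b-2)+(j_x-a)+(j_y-b) = j_x+j_y-2 < j_x+j_y$. The maximum over all $(a,b)$ of this bound is still $j_x+j_y-2$, which is strictly less than $j_x+j_y$, so $\deg R_{(j_x,j_y)} < j_x+j_y$ as claimed.

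The only subtlety, and the point I would be most careful about, is the meaning of \emph{degree} in Lemma \ref{lmm:mus}: the statement is made modulo the relation $\lambda_{1,0}^2+\lambda_{0,1}^2=-\kappa^2\pG$, so strictly speaking each $\mu_{a,b}$ is a representative of an equivalence class. As explained in the remark following Lemma \ref{lmm:mus}, one picks any representative of degree at most $a+b-2$; the assembled $R_{(j_x,j_y)}$ is then a bona fide polynomial of degree at most $j_x+j_y-2$. No further issue arises, since the identity being proved is an equality of complex numbers obtained after substituting the normalized values of $(\lambda_{1,0},\lambda_{0,1})$.
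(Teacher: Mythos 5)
Your proof is correct and follows essentially the same route as the paper: expand $\dx^{j_x}\dy^{j_y}G_k\pG$ via the Leibniz rule for the product $Q\cdot H_k$, isolate the $\mu_{0,0}=1$ term as the reference-PW contribution, and bound the remainder's degree using Lemma~\ref{lmm:mus}. The only cosmetic difference is that the paper splits into the cases $j_x=0$, $j_x=1$, $j_x\geq 2$ (exploiting the vanishing of $\mu_{i_x,i_y}$ for $i_x\in\{0,1\}$, $i_x+i_y\geq 1$, to observe $R$ is identically zero in the first two cases), whereas you handle all indices uniformly; both yield the same bound $\deg R_{(j_x,j_y)}\leq j_x+j_y-2$, and your closing remark on the quotient-ring interpretation of the degree claim is exactly the caveat the paper itself makes after Lemma~\ref{lmm:mus}.
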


\begin{proof}
For the vector $\vec{d}_k:= \sqrt{-\kappa^2\pG} (\cos \theta_k,\sin\theta_k)$ associated to any angle $\theta_k\in\mathrm A$, the reference and new basis functions are respectively $H_k$ and $G_k$. We first notice that $G_k=Q\cdot H_k$ where $Q$ is the polynomial constructed via Algorithm \ref{Algo:AbGPW}. Then, since $\dx^{i_x} \dy^{i_y} Q\pG = i_x!i_y!\mu_{i_x,i_y}$  for any $(i_x,i_y)\in\mathbb N_0^2$ such that $i_x+i_y\leq q+1 $, the product rule shows that as long as $j_x+j_y\leq q+1 $ we have:
\begin{equation*}
\frac{1}{j_x!j_y!}\dx^{j_x} \dy^{j_y} G_k \pG
=
\sum_{i_x=0}^{j_x}
\sum_{i_y=0}^{j_y}
\frac{1}{(j_x-i_x)!(j_y-i_y)!}
\mu_{i_x,i_y}
\left( \lambda_{1,0} \right)^{j_x-i_x} \left( \lambda_{0,1} \right)^{j_y-i_y}.
\end{equation*}
In view of the normalization chosen in Algorithm \ref{Algo:AbGPW}, it gives for $(j_x,j_y)\in\mathbb N_0^2$ such that $j_x+j_y\leq q+1 $:
\begin{equation*}
\left\{
\begin{array}{l}
\displaystyle
\frac{1}{j_y!}\dy^{j_y} G_k \pG
=
\frac{1}{j_y!}
 \left( \lambda_{0,1} \right)^{j_y},\text{if }j_x= 0,
 \\
 \displaystyle
\frac{1}{j_y!}\dx \dy^{j_y} G_k \pG
=
\frac{1}{j_y!}
\left( \lambda_{1,0} \right)^1 \left( \lambda_{0,1} \right)^{j_y},\text{if }j_x= 1,
\\
\displaystyle
\frac{1}{j_x!j_y!}\dx^{j_x} \dy^{j_y} G_k \pG
=
\frac{1}{j_x!j_y!}
\left( \lambda_{1,0} \right)^{j_x} \left( \lambda_{0,1} \right)^{j_y}
+
\sum_{i_x=2}^{j_x}
\sum_{i_y=0}^{j_y}
\frac{ \left( \lambda_{0,1} \right)^{j_y-i_y}\left( \lambda_{1,0} \right)^{j_x-i_x}}{(j_x-i_x)!(j_y-i_y)!}
\mu_{i_x,i_y},
\text{if }j_x\geq 2.
\end{array}
\right.
\end{equation*}
On the other hand, $\dx^{j_x} \dy^{j_y} H_k \pG =\left( \lambda_{0,1} \right)^{j_y-i_y}\left( \lambda_{1,0} \right)^{j_x-i_x} $ for any $(j_x,j_y)\in\mathbb N_0^2$. So the result is proved for $j_x=0$ and $j_x=1$, while for $j_x\geq 2$ the result is a direct consequence of Lemma \ref{lmm:mus}.
\end{proof}

\begin{lmm}
\label{lmm:rkMn}
Assume that the point $\pG\in\mathbb R^2$ as well as the parameters $n\in\mathbb N$ are given and that $\kappa^2$ is a function of class $\mathcal C^{q-1}$ in a neighborhood of  $\pG$.
For any set of $p=2n+1$ distinct angles $\mathrm A:=\{\theta_k \in [0,2\pi); 1\leq k \leq p\}$, ranked in a given order, we consider the reference and GPW matrices, $\mathsf M^{C}_n$ and $\mathsf M^{G}_n$,  respectively defined for the reference and the GPW bases $\mathcal B^{ref}_{\mathrm A}$ and $\mathcal B_{\mathrm A,q}$ for any $q\geq\max(n-1,1)$, with their elements ranked in the same order.
There exists a non-singular matrix $\mathsf L_n\in\mathbb C^{(n+1)(n+2)/2\times (n+1)(n+2)/2}$ such that
\begin{equation*}
\mathsf M^{G}_n= \mathsf L _n\mathsf M^{C}_n.
\end{equation*}
As a consequence, as long as $\kappa^2\pG\neq 0$, $rk\left(\mathsf M^{G}_n\right) = 2n+1$.
\end{lmm}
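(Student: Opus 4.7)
The plan is to deduce this matrix identity from Lemma \ref{lmm:ders} almost mechanically. First, I would fix any $(j_x,j_y)$ with $j_x+j_y\leq n$ and apply that lemma to write the $(j_x,j_y)$-th entry of the $k$-th column of $\mathsf M^{G}_n$ as the corresponding entry of $\mathsf M^{C}_n$ plus the correction $R_{(j_x,j_y)}(\lambda_{1,0}^{(k)},\lambda_{0,1}^{(k)})$, where the $k$-th direction is written as $\vec d_k = (\lambda_{1,0}^{(k)},\lambda_{0,1}^{(k)})$ and the polynomial $R_{(j_x,j_y)}$ does not depend on $k$. Expanding $R_{(j_x,j_y)}$ as a linear combination of monomials $\lambda_{1,0}^{i_x}\lambda_{0,1}^{i_y}$, each such monomial evaluated at $\vec d_k$ is, up to the factor $i_x!\, i_y!$, precisely the $((i_x,i_y),k)$ entry of $\mathsf M^{C}_n$. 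This converts the scalar identity of Lemma \ref{lmm:ders} into a row-wise linear combination, with coefficients that are the same for every column $k$.

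The key ingredient is the degree bound $\deg R_{(j_x,j_y)} < j_x+j_y$: it guarantees that only monomials with $i_x+i_y < j_x+j_y \leq n$ enter the expansion, so all the rows of $\mathsf M^{C}_n$ invoked in the combination actually exist, and they all sit strictly below the $(j_x,j_y)$-th row in the ordering by non-decreasing total degree baked into definition \eqref{eq:DefMat}. Assembling the resulting relations for every row of $\mathsf M^{G}_n$ produces the identity $\mathsf M^{G}_n = \mathsf L_n \mathsf M^{C}_n$ with $\mathsf L_n$ block-lower-triangular by total degree and carrying $1$'s on the diagonal (the diagonal entry coming from the first term in Lemma \ref{lmm:ders}, and the strictly lower entries from the expansion of $R_{(j_x,j_y)}$). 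Such a unit lower triangular matrix is automatically non-singular.

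Once this triangular factorization is established, the rank claim is immediate: since $\mathsf L_n$ is invertible, $\mathrm{rk}(\mathsf M^{G}_n) = \mathrm{rk}(\mathsf M^{C}_n)$, which equals $2n+1$ by Lemma \ref{lmm:rkMnC} under the assumption $\kappa^2\pG \neq 0$. I do not anticipate a genuine obstacle in this argument: the only bookkeeping concern is to verify that the row ordering of \eqref{eq:DefMat} is compatible with the layered index structure used in the construction of the $\mu_{i_x,i_y}$, but this is immediate from the definition itself, so nothing beyond the expansion above is really needed.
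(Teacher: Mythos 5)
Your proposal is correct and matches the paper's own proof essentially step for step: restating Lemma \ref{lmm:ders} entrywise, expanding $R_{(j_x,j_y)}$ in monomials, identifying each monomial $\lambda_{1,0}^{i_x}\lambda_{0,1}^{i_y}$ (up to the $i_x!\,i_y!$ factor, which you correctly track) with the $(i_x,i_y)$ row of $\mathsf M^C_n$, and reading off a lower unitriangular $\mathsf L_n$ from the strict degree bound. The only cosmetic difference is that you describe $\mathsf L_n$ as block-lower-triangular by total degree with unit diagonal; the strictness $\deg R_{(j_x,j_y)} < j_x+j_y$ in fact forces the within-block off-diagonal entries to vanish as well, so it is lower unitriangular exactly as the paper states, and non-singularity (hence rank equality with $\mathsf M^C_n$) follows either way.
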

\begin{proof}
The choice  $q\geq\max(n-1,1)$ guarantees that $n\leq q+1$, therefore the result of Lemma \ref{lmm:ders} holds in particular for all entries of the matrices, and it can be restated as: there  exits  a complex polynomial in two variables $R_{(j_x,j_y)}$ with $\deg R_{(j_x,j_y)}<j_x+j_y$ such that for all $k\in\mathbb N$ with $k\leq p$ we have
\begin{equation}
\label{eq:rowMn}
\left( \mathsf M^{G}_n \right)_{\frac{(j_x+j_y)(j_x+j_y+1)}{2}+j_y+1,k}
=
\left( \mathsf M^{C}_n \right)_{\frac{(j_x+j_y)(j_x+j_y+1)}{2}+j_y+1,k}
+
R_{(j_x,j_y)}\left(
\left( \mathsf M^{C}_n \right)_{2,k},
\left( \mathsf M^{C}_n \right)_{3,k}
\right).
\end{equation}
Moreover, from the definition of the reference basis functions $H_k$, we have 
$$\dx^{j_x} \dy^{j_y}  H_k \pG 
=\Big(\lambda_{1,0}\Big)^{j_y} \Big( \lambda_{0,1}\Big)^{j_x} 
=\ \dx H_k \pG^{j_x} \dy  H_k \pG^{j_y}$$ 
where $\left(\lambda_{1,0},\lambda_{0,1}\right)=\sqrt{-\kappa^2\pG} (\cos \theta_k,\sin\theta_k)$.
Hence, from the definition of the reference matrix $\mathsf M^{C}_n$, we can identify
\begin{equation*}
\left(\left( \mathsf M^{C}_n \right)_{2,k}\right)^{j_y}
\left(\left( \mathsf M^{C}_n \right)_{3,k}\right)^{j_x}
=
\Big(\mathsf M^C_n\Big)_{\frac{(j_x+j_y)(j_x+j_y+1)}{2}+j_y+1,k}.
\end{equation*}
  It is now clear that \eqref{eq:rowMn} is precisely stating that $ \mathsf M^{G}_n$'s row number $\frac{(j_x+j_y)(j_x+j_y+1)}{2}+j_y+1$ can be written as a linear combination of  $ \mathsf M^{C}_n$'s rows, more precisely it can be written as the sum of:
  \begin{itemize}
  \item once $ \mathsf M^{C}_n$'s row number $\frac{(j_x+j_y)(j_x+j_y+1)}{2}+j_y+1$,
  \item a linear combination of $ \mathsf M^{C}_n$'s row of index at most equal to $\frac{(j_x+j_y)(j_x+j_y+1)}{2}$.
  \end{itemize}
  Finally this is equivalent to the existence of a lower unitriangular matrix $\mathsf L_n$ such that $\mathsf M^{G}_n= \mathsf L _n\mathsf M^{C}_n$.
  
As a consequence, $rk\left(\mathsf M^{G}_n\right)= rk\left(\mathsf M^{C}_n\right)$. Hence, from Lemma \ref{lmm:rkMnC}, if $\kappa^2\pG\neq 0$ then $rk\left(\mathsf M^{G}_n\right)=2n+1$.
\end{proof}

\subsection*{Step 5} 
Finally we can pull the pieces together to study the interpolation properties of the space spanned by the new GPWs.
\begin{dfn}
Assume that the point $\pG\in\mathbb R^2$ as well as the parameters $(p,q)\in\mathbb N^2$ are given and that $\kappa^2$ is a function of class $\mathcal C^{q-1}$ in a neighborhood of $\pG$.
For any set of $p$ distinct angles $\mathrm A:=\{\theta_k \in [0,2\pi); 1\leq k \leq p\}$, ranked in a given order, 
%
we consider the GPW basis $\mathcal B_{\mathrm A,q}$ for any $q\geq\max(n-1,1)$. The complex vector spaced spanned by  $\mathcal B_{\mathrm A,q}$ will be denoted $\mathbb V_{\pG}^{\mathrm A,q}$.
\end{dfn}
In order to obtain the desired interpolation properties, it is therefore sufficient to pick the approximation parameter $q$ to be equal to $\max(n-1,1)$. Picking a higher value would guarantee the same properties, but would result in an unnecessary increase of the GPW construction's computational cost.
\begin{thm}
\label{thm:AbInterp}
Assume that the point $\pG\in\mathbb R^2$ as well as the parameters $n\in\mathbb N$ are given and that $\kappa^2$ is a function of class $\mathcal C^{n}$ in a neighborhood of $\pG$, with $\kappa^2\pG\neq 0$.
For any set of $p=2n+1$ distinct angles $\mathrm A:=\{\theta_k \in [0,2\pi); 1\leq k \leq p\}$, we consider the GPW space $\mathbb V_{\pG}^{\mathrm A,\max(n-1,1)}$.

For any solution $u$ of the PDE \eqref{eq:Helm} which is  of class $\mathcal C^{n}$ at $\pG$, there exists a GPW function $u_a\in\mathbb V_{\pG}^{\mathrm A,\max(n-1,1)}$ and a constant $C$ such that in a neighborhood of $\pG$:
\begin{equation}
\label{eq:AppProp}
\left\{
\begin{array}{l}
\displaystyle
\left| (u - u_a) (x,y) \right| \leq  C |(x,y)-\pG|^{n+1}\\
\displaystyle
\left| (\nabla u - \nabla u_a) (x,y) \right| \leq  C |(x,y)-\pG|^{n}
\end{array}
\right.
\end{equation}
\end{thm}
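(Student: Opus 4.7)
The plan is to construct $u_a$ as a linear combination $\sum_{k=1}^p \alpha_k G_k \in \mathbb V_{\pG}^{\mathrm A,\max(n-1,1)}$ whose Taylor expansion at $\pG$ matches that of $u$ up to order $n$, and then to deduce \eqref{eq:AppProp} from Taylor's remainder theorem. Matching Taylor coefficients up to order $n$ translates directly into the linear system $\mathsf M^G_n \vec{\alpha} = \vec{b}$, where $\vec{b}$ collects the Taylor coefficients $\frac{1}{j_x!\,j_y!}\dx^{j_x}\dy^{j_y} u \pG$ indexed as in \eqref{eq:DefMat}. Once such a $\vec{\alpha}$ has been produced, $u-u_a$ vanishes at $\pG$ together with all its partial derivatives of order at most $n$, and Taylor's theorem immediately yields both the $|(x,y)-\pG|^{n+1}$ bound on $u - u_a$ and the $|(x,y)-\pG|^n$ bound on $\nabla(u - u_a)$. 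So the entire proof reduces to showing that this linear system is solvable.

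To establish solvability, I would identify the column space of $\mathsf M^G_n$ with the ``PDE-compatible'' subspace of Taylor data. Define $\mathcal A_n \subset \mathbb C^{(n+1)(n+2)/2}$ as the set of vectors $(c_{j_x,j_y})_{j_x+j_y\leq n}$ satisfying the $n(n-1)/2$ linear relations obtained by differentiating $(-\Delta-\kappa^2)v = 0$ at $\pG$ for $j_x+j_y \leq n-2$. Because the relation indexed by $(j_x,j_y)$ introduces the coefficient $c_{j_x+2,j_y}$ that did not appear in any earlier relation, these relations are linearly independent, and thus $\dim \mathcal A_n = (n+1)(n+2)/2 - n(n-1)/2 = 2n+1$. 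On the one hand $\vec{b} \in \mathcal A_n$, because $u$ solves \eqref{eq:Helm} exactly. On the other hand, every column of $\mathsf M^G_n$ also lies in $\mathcal A_n$: Proposition \ref{prop:SatisTE} gives $(-\Delta-\kappa^2) G_k = O(|(x,y)-\pG|^q)$ with $q \geq \max(n-1,1) \geq n-1$, so all the defining relations of $\mathcal A_n$ are satisfied by the Taylor data of each $G_k$. Combined with Lemma \ref{lmm:rkMn}, which gives $\mathrm{rk}(\mathsf M^G_n) = 2n+1 = \dim \mathcal A_n$, the column space of $\mathsf M^G_n$ must then coincide with $\mathcal A_n$, and therefore $\vec b$ lies in the range of $\mathsf M^G_n$.

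The main obstacle I anticipate is this identification of the column space of $\mathsf M^G_n$ with $\mathcal A_n$, which hinges on matching two independent dimension counts: the codimension of the PDE-compatibility relations in the ambient Taylor-data space, and the full column rank of $\mathsf M^G_n$ delivered by Lemma \ref{lmm:rkMn}. The compatibility between the Taylor matching order $n$ and the quasi-Trefftz order $q$ of the GPW construction is precisely the origin of the condition $q \geq \max(n-1,1)$ in the statement, since only this range guarantees that the GPW columns lie in $\mathcal A_n$ rather than in some strictly larger subspace. The remaining steps are routine: once $\vec{\alpha}$ is extracted from the solvable system, the vanishing of all Taylor coefficients of $u - u_a$ up to order $n$ together with standard remainder estimates yields \eqref{eq:AppProp} and closes the argument.
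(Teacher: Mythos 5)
Your proposal matches the paper's proof almost exactly: both reduce the claim to solvability of $\mathsf M^G_n \mathsf X = \mathsf U$, invoke Lemma \ref{lmm:rkMn} for full column rank $2n+1$, and identify the column space of $\mathsf M^G_n$ with the subspace of PDE-compatible Taylor data (which the paper calls $\mathfrak K$) containing $\mathsf U$. Your explicit dimension count $\dim\mathcal A_n = (n+1)(n+2)/2 - n(n-1)/2 = 2n+1$ together with the observation that each GPW column lies in $\mathcal A_n$ via Proposition \ref{prop:SatisTE} simply spells out the identification step that the paper asserts without detail.
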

 Thanks to our preliminary Lemmas, the proof is identical to the one from \cite{LMinterp} for phase-based GPWs. We repeat the proof here for the sake of completeness. Even though the preliminary steps were studied under the assumption that $\kappa^2\pG\neq 0$, as it was mentioned in Section \ref{sec:constr}, amplitude-based GPWs with an appropriate normalization enjoy the same interpolation properties even if $\kappa^2\pG=0$.
\begin{proof}
It is sufficient for the Taylor expansion of a function $u_a\in\mathbb V_{\pG}^{\mathrm A,\max(n-1,1)}$ to match that of the solution $u$ to prove the theorem.

Any element of $\mathbb V_{\pG}^{\mathrm A,\max(n-1,1)}$ can be written as $\displaystyle \sum_{k=1}^{2n+1} X_{k}G_k$, and its Taylor expansion  matches that of the solution $u$ if and only if
\begin{equation}
\label{eq:MatVecSys}
\mathsf M^{G}_n \mathsf X = \mathsf U
\end{equation}
where the $k$th entry of $\mathsf X\in\mathbb C^{2n+1}$ is the coefficient $X_k$ while for all $(j_x,j_y)\in(\mathbb N_0)^2$ the $\frac{(j_x+j_y)(j_x+j_y+1)}{2}+j_y+1$th entry of $\mathsf U\in\mathbb C^{\frac{(n+1)(n+2)}{2}}$ is the Taylor expansion coefficient $\dx^{j_x} \dy^{j_y} u \pG/(j_x!j_y!)$. From Lemma \ref{lmm:rkMn} the matrix $\mathsf M^{G}_n\in\mathbb C^{\frac{(n+1)(n+2)}{2}\times (2n+1)}$ has maximal rank $2n+1$. Moreover, the range of $\mathsf M^{G}_n$ can be identified as
$$
\mathfrak K:= \left\{ (C_{j_x,j_y}) \in  \mathbb C^{\frac{(n+1)(n+2)}{2}}, \forall(j_x,j_y)\in\mathbb N^2, j_x+j_y\leq n-2, 
\phantom{\sum_{j=0}^{k_1} \frac{\dx^i \dy^j \beta\left(\overrightarrow g\right)}{i!j!} C_{k_1-j,k_2} (k_1+1)}
\right.
$$
$$
\left. (j_x+1)(j_x+2)C_{j_x+2,j_y} +(j_y+1)(j_y+2)C_{j_x,j_y+2}=-\sum_{i_x=0}^{j_x}\sum_{i_y=0}^{j_y} \frac{\dx^{i_x} \dy^{i_y} \kappa^2\pG}{i_x!i_y!} C_{j_x-i_x,j_y-i_y} \right\},
$$
and the right hand side $\mathsf U$ of \eqref{eq:MatVecSys} clearly belongs to $\mathfrak K$ as the function $u$ solves the PDE \eqref{eq:Helm}.

As a result there exists a solution $\mathsf X$ to the linear system  \eqref{eq:MatVecSys}, and the corresponding GPW function $u_a:=\displaystyle \sum_{k=1}^{2n+1} X_{k}G_k\in \mathbb V_{\pG}^{\mathrm A,\max(n-1,1)}$ is guaranteed to have the same Taylor expansion as $u$ up to order $n$. In other words, we have $(u-u_a)(x,y)=O\left(|(x,y)-\pG|^{n+1}\right)$, and this clearly implies the result \eqref{eq:AppProp}.
\end{proof}

\section{Extension beyond the Helmholtz equation}
\label{sec:Ext}
It is a natural question to consider how can this work, developed in the previous sections for the Helmholtz operator $-\Delta-\kappa^2(x,y)$, be extended to
other linear partial differential operators. This was the goal of \cite{IGS} for phase-based GPWs, where were considered operators of order $\mfn \geq 2$, in two dimensions, of the form
\begin{equation}
\label{eq:LinOp}
 \Lal := \sum_{\ell = 0}^{\mfn} \sum_{k=0}^\ell \alpha_{k,\ell-k}\left( x , y \right) \partial_x^k \partial_y^{\ell-k},
\end{equation}
with   $\alpha = \{\alpha_{k,\ell-k},(k,\ell)\in\mathbb N^2, 0\leq k\leq\ell\leq \mfn\}$ represents the set  of complex-valued coefficients. The situation is similar for amplitude-based GPWs.
On the one hand, the construction process of amplitude-based GPWs proposed earlier can be extended as-is to a large family of linear partial differential equations of the form \eqref{eq:LinOp}, under a simple assumption $\alpha_{M,0}\pG\neq 0$. This is simply because the key to the construction algorithm does not lie in any of the normalization choices, but rather in Formula \eqref{inductionform}.
On the other hand, the interpolation properties strongly rely on the normalization, as we have seen that   the fact for  $ \left( \lambda_{1,0} \right)^2 + \left( \lambda_{0,1} \right)^2$ to be constant was fundamental as early as in {\bf Step 1}.

In this section, instead of considering general operators \eqref{eq:LinOp} as in \cite{IGS}, we will focus on operators of second order allowing for anisotropy in the first and second order terms, written under the form
\begin{equation}
\label{eq:GenOp}
\mathcal L:= \nabla \cdot A(x,y) \nabla + V(x,y) \cdot \nabla +  s(x,y),
\end{equation} 
with $A$ a matrix-valued function, $V$ a vector-valued function, and $s$ a scalar-valued function, under the assumption that the matrix $A\pG$ is real symmetric with non-zero eigenvalues.
We will point out how the construction process and the proof of interpolation properties can both be adapted to these operators, see respectively Subsections \ref{ssec:ExtConstr} and \ref{ssec:ExtInterp}.
However, similarly to the work presented in \cite{IGS}, this work extends to certain operators of higher order like \eqref{eq:LinOp} -- under the same hypothesis.

\subsection{The construction process}
\label{ssec:ExtConstr}
In the construction process, the crucial point lies in the identification of linear subsystems thanks to the layer structure. 
Defining 
$$
\widetilde V  := 
\begin{pmatrix}
\dx A_{11} +2\lambda_{1,0}A_{11}+A_{12}\lambda_{0,1} + \dy A_{21} + A_{21} \lambda_{0,1}+V_1
\\
\dy A_{22} +2\lambda_{0,1}A_{22}+A_{21}\lambda_{1,0} + \dx A_{12} + A_{12} \lambda_{1,0}+V_2
\end{pmatrix},
$$
$$
\begin{array}{rl}
\widetilde s :=  &
s+\lambda_{1,0} + \lambda_{0,1} 
+(\dx A_{11} + \dx A_{21})\lambda_{1,0} + (\dx A_{12} +\dy A_{22})\lambda_{0,1} \\
&+ A_{11}\lambda_{1,0}^2 +(A_{12}+A_{21}) \lambda_{1,0}\lambda_{0,1} + A_{22} \lambda_{0,1}^2,
\end{array}
$$
one can easily verify that the equivalent to Problem \eqref{pb:AbGPW} reads
\begin{equation}
\label{eq:toTE}
\left\{
\begin{array}{l}
\text{Find } \left(Q, (\lambda_{10} , \lambda_{01} )\right)\in\mathbb C[X,Y]\times\mathbb C^2 \text{ such that }\\
A_{11}\dx^2 Q (x,y) +(A_{12}+A_{21})\dx\dy Q (x,y) +A_{22}\dy^2 Q (x,y) 
+\widetilde V(x,y)\cdot \nabla Q(x,y)
+\widetilde s(x,y)Q(x,y) \\
\qquad\qquad\qquad\qquad\qquad\qquad\qquad\qquad\qquad\qquad\qquad\qquad\qquad\qquad\qquad= O(|(x,y)-\pG|^q)\\
G(x,y) := Q(x,y)\exp \Big( \lambda_{10} (x-\xG) + \lambda_{01} (y-\yG) \Big)
\end{array}
\right.
\end{equation}
where it is important to keep in mind that the second order term coefficients are not constant here.  For a given $ (\lambda_{10} , \lambda_{01} )\in\mathbb C^2$, as in Subsection \ref{ssec:Lsys}, the linear system's unknowns are still the polynomial coefficients of $Q$ while the  linear system's equations are still the Taylor expansion coefficients of \eqref{eq:toTE}. The degree of $Q$, for the same reason as earlier, is set to $\deg Q = q+1$.
For a more compact notation we will write $A^c:=A\pG$.
Instead of the Laplacian defined on spaces of homogeneous polynomials, it is the operator $\nabla \cdot A^c\nabla $ defined on spaces of homogeneous polynomials which is here key to the layer structure. As a consequence, the well-posedness relies on the size of the kernel of this operator, which is again equal to $2$ according to the identity
\begin{equation*}
\begin{array}{l}
 \displaystyle
\nabla \cdot A^c\nabla \left[\sum_{i=0}^{\ell+2} p_i X^iY^{\ell+2-i}\right]
\\ \displaystyle
=
\sum_{i=0}^{\ell} \Big(
    (i+2)(i+1)           A_{11}^c p_{i+2} 
+ (i+1)(\ell+1-i)      (A_{12}^c+A_{21}^c) p_{i+1}
+ (\ell+2-i)(\ell+1-i)A_{22}^cp_i
\Big) X^iY^{\ell-i}.
\end{array}
\end{equation*}

Therefore the linear system is also has a solution, and an algorithm following the same stages as  \ref{Algo:AbGPW}, with updated formulas for $\mathsf{RHS}$ and $ \mu_{j_x+2,\ell-j_x} $, constructs an amplitude-based GPW for any $\vec{d}=(\lambda_{1,0},\lambda_{0,1})\in\mathbb C^2$ which satisfies the local approximation property in the vicinity of $\pG$
\begin{equation*}
(\nabla \cdot A(x,y) \nabla + V(x,y) \cdot \nabla +  s(x,y))G_{\vec d}(x,y) = O(|(x,y)-\pG|^q),
\end{equation*}
indepedently of the normalization.

\subsection{Interpolation properties}
\label{ssec:ExtInterp}
The impact of the normalization on the interpolation properties appears in Section \ref{sec:int} when we express the coefficients $\{ \mu_{i_x,i_y},0\leq i_x+i_y\leq q+1 \}$ of $Q$ in terms of the phase coefficients $(\lambda_{1,0},\lambda_{0,1})$. In Lemma \ref{lmm:mus}'s proof, the assumption that $(\lambda_{1,0})^2+(\lambda_{0,1})^2$ is fixed is crucial to show that $\mu_{2,0}$ can be expressed as a constant polynomial in $\mathbb C[\lambda_{1,0},\lambda_{0,1}]$. In the case of the generalized operator \eqref{eq:GenOp}, the expression for the unknown $\mu_{20}$ corresponding to \eqref{eq:mu20}  reads
$$
\mu_{2,0} 
= 
-\frac1{2A_{11}\pG}\left(
\widetilde V_1\pG\mu_{1,0} +\widetilde V_2\pG\mu_{0,1} + \widetilde s\pG \mu_{0,0}
\right)
$$
Since $\mu_{1,0}=\mu_{0,1}=0$ while $\mu_{0,0}=1$, we can focus our attention on the last term. From the definition of $\widetilde s$ we can consider $\widetilde s \pG$ as an element $\mathbb C[\lambda_{1,0},\lambda_{0,1}]$:
$$
\begin{array}{rl}
\widetilde s \pG = &
s\pG+\lambda_{1,0} + \lambda_{0,1} 
+(\dx A_{11} + \dx A_{21})\pG\lambda_{1,0} + (\dx A_{12} +\dy A_{22})\pG\lambda_{0,1} \\&
+ A_{11}\pG\lambda_{1,0}^2 +(A_{12}+A_{21})\pG \lambda_{1,0}\lambda_{0,1} + A_{22} \pG\lambda_{0,1}^2.
\end{array}
$$
The choice of normalization for $(\lambda_{1,0},\lambda_{0,1})$ then determines how, in turn, $\mu_{2,0} $ can be expressed as an element of  $\mathbb C[\lambda_{1,0},\lambda_{0,1}]$.
For the Helmholtz case, since $A_{12} = A_{21}=0$ and $A_{11}=A_{22}=1$, the second degree terms were reduced to $(\lambda_{1,0})^2+(\lambda_{0,1})^2$ and there were no first degree terms. The normalization assumption that $(\lambda_{1,0})^2+(\lambda_{0,1})^2$ was fixed therefore resulted in the expression of $\mu_{2,0}$ as a constant polynomial in $\mathbb C[\lambda_{1,0},\lambda_{0,1}]$.  But here the first degree terms are non zero unless the matrix $A\pG$ is constant, and there are three second degree terms. However, since the matrix $A\pG$ is assumed to be real symmetric with non-zero eigenvalues, denoted here $\gamma_1,\gamma_2$, it can be diagonalized by an orthogonal matrix: there exists an orthogonal matrix $P$ such that $\begin{pmatrix}\gamma_1&0\\0&\gamma_2\end{pmatrix}=P A\pG P^T$, and the second degree terms can then be rewritten as
$$
\begin{array}{l}
A_{11}\pG\lambda_{1,0}^2 +(A_{12}+A_{21})\pG \lambda_{1,0}\lambda_{0,1} + A_{22} \pG\lambda_{0,1}^2 \\
=\gamma_1 \left( P_{11}\lambda_{1,0}+P_{12}\lambda_{0,1} \right)^2
+\gamma_2 \left( P_{21}\lambda_{1,0}+P_{22}\lambda_{0,1}  \right)^2.
\end{array}
$$
The normalization assumption that 
\begin{equation}
\label{eq:GenNorm}
\begin{pmatrix} \lambda_{1,0}\\\lambda_{0,1}\end{pmatrix}
\propto 
P^T \begin{pmatrix}1/\sqrt{\gamma_1}&0\\0&1/\sqrt{\gamma_2}\end{pmatrix}
\begin{pmatrix} \cos\theta\\\sin\theta\end{pmatrix}
\end{equation}
then guarantees that $\mu_{2,0}$ can be written as a polynomial of degree at most equal to $1$ in  $\mathbb C[\lambda_{1,0},\lambda_{0,1}]$. So the normalization is different from that of the Helmholtz case, although if $A$ is the identity matrix and $V$ is the zero vector then \eqref{eq:GenNorm} reduces to the Helmholts PW normalization.

Starting from this point and following a similar reasoning as that of Lemma \ref{lmm:mus}'s proof, we can prove that each $\mu_{i_x,i_y}$ for $i_x+i_y\geq 1$ can be expressed as a polynomial in $\mathbb C[\lambda_{1,0},\lambda_{0,1}]$ of degree at most equal to $i_x+i_y-1$. 

The roadmap's second and third steps are independent of the GPW normalization choice as the reference case is still the classical PW case, with propagation direction 
$( \cos\theta,\sin\theta)$  
but without any restriction on the wave number, that is functions  $H_k :=\exp \Big({\vec{d}_k} \cdot (\boldsymbol{\cdot}- \pG)\Big)$ with some $\vec{d}_k\propto (\cos \theta_k,\sin\theta_k)$ .

There is an important consequence of the normalization to comment concerning the roadmap's fourth step. 
The functions  $F_k :=\exp \Big({\vec{e}_k} \cdot (\boldsymbol{\cdot}- \pG)\Big)$ with $\vec{e}_k:= (\lambda_{1,0},\lambda_{0,1})$  with the normalization \eqref{eq:GenNorm} for $\theta=\theta_k$ are a natural intermediate between the GPWs and the classical PWs, useful to write the GPWs as $G_k=Q\cdot F_k$. While in the Helmholtz case it is clear that $H_k=F_k$ for the appropriate choice of wave number, this is not the case as soon as the matrix $A$ is distinct from the identity. The pending result to Lemma \ref{lmm:ders} would then relate the derivatives of $G_k$s to derivatives of $F_k$: any derivative of order $j_x+j_y$ of the difference $G_k-F_k$ could be expressed as a polynomial of degree smaller than $j_x+j_y$ in $\mathbb C[\lambda_{1,0},\lambda_{0,1}]$. The only impact of the normalization on the proof is through Lemma \ref{lmm:mus}, and $\mu_{i_x,i_y}$ being expressed as a polynomial of degree at most equal to $i_x+i_y-1$ is sufficient to conclude.
The fact that $F_k\neq H_k$ then requires an additional step to relate the matrices $\mathsf M^{G}_n$ and $\mathsf M^{C}_n$, thanks to the introduction of the matrix $\mathsf M^{F}_n$ corresponding to the intermediate set of functions $\{F_k, k\in\mathbb N, k\leq p\}$. Relating $\mathsf M^{G}_n$ to $\mathsf M^{F}_n$ follows from the previous comment, just as in Lemma \ref{lmm:rkMn}. Relating $\mathsf M^{F}_n$ to  $\mathsf M^{C}_n$  is straightforward as can be seen from Lemma 7 in \cite{IGS}. This provides again all the pieces to prove that the rank of $\mathsf M^{G}_n$ is $2n+1$ as expected.

Finally, one more time, the fifth step is a direct consequence of the preliminary work. So the proof of Theorem \ref{thm:AbInterp} does not need to be adapted, and the same interpolation properties hold for the operator \eqref{eq:GenOp} as for the Helmholtz operator, under the assumptions that
\begin{itemize}
\item  the matrix $A\pG$ is real symmetric with non-zero eigenvalues,
\item the normalization of $(\lambda_{1,0},\lambda_{0,1})$ is chosen according to \eqref{eq:GenNorm}.
\end{itemize}

\section{Numerical results}
\label{sec:NR}
This section is dedicated to illustrating the interpolation properties of amplitude-based GPWs, for the Helmholtz equation and beyond. The most natural aspect to discuss is the expected high order convergence, but we will also emphasize aspects of conditioning, 
  as well as a comparison with phase-based GPWs in terms of pre-asymptotic behavior. 

The testing procedure follows the structure of the theoretical part of this article. The parameters are set according to the theorem: $p=2n+1$, $q=\max(n-1,1)$, for increasing values of $n$. 
A set of GPWs with appropriate normalization is constructed following Algorithm \ref{Algo:AbGPW}, and the GPW approximation $u_a$'s coefficients in the GPW basis are computed following the proof of Theorem \ref{thm:AbInterp}. However, rather than considering a single point $\pG$, we will consider 50 random points distributed in a given domain $\Omega$, as we keep in mind that the GPWs are meant to be local basis for a problem set on the full domain.

In order to illustrate precisely the first estimate from Theorem \ref{thm:AbInterp}, 
$$
\left| (u - u_a) (x,y) \right| \leq  C |(x,y)-\pG|^{n+1},
$$
we investigate the $h$ convergence, i.e in the regime $h:= |(x,y)-\pG|$ approaching $0$, of the difference between the exact solution $u$ to be approximated and the constructed GPW approximation $u_a$. The approximate function $u_a$ itself is independent of $h$ but depends on $n$.
The error reported here is an estimate of the $L^\infty$ norm of the difference $u-u_a$ on the circle centered at $\pG$ of radius $h$. This is different from the norm reported in previous work, where we reported the  $L^\infty$ norm of the difference $u-u_a$ on the disk centered at $\pG$ of radius $h$.
We then report the largest error among errors obtained at each of the 50 random points $\pG$.
 According to the theorem, for each value of $n$ we expect to observe convergence of order $n+1$. The constant $C$ depends both on $n$ and on $\pG$,

All the numerical experiments presented in this article were computed with the same set of normalization angles, $\mathrm A:=\left\{\theta_k =\frac{2(k-1)\pi}{p}+\frac{\pi}{6}; 1\leq k \leq p\right\}$, in order to avoid any particular alignment of the basis functions with the coordinate axes.

\subsection{List of test cases} Each test case consists of a variable-coefficient PDE \eqref{eq:Helm} or \eqref{eq:GenOp}, a given domain, and an exact solution $u$ to this PDE. 
It is straightforward to verify that these operators satisfy the hypothesis described in section \ref{sec:Ext}.

\begin{center}
\begin{tabular}{|c|c|c|c|}
\hline
ref. &Operator                    & Domain & Exact solution\\\hline\hline
Ae&
 $\Delta -(x-1)$&
 $[-2,2]\times[-2,2]$&
$ u(x,y) = Ai(x) \exp i y$
 \\\hline
 Ac&
 $\Delta -(x-1)$&
 $[-2,2]\times[-2,2]$&
$ u(x,y) = Ai(x) \cos  y$
 \\\hline
 A+&
 $\Delta -2(x+y)$&
 $[-2,2]\times[-2,2]$&
$ u(x,y) = Ai(x+ y)$
 \\\hline
 cs&
 $\dx^2+.2\cos x\sin y\dx\dy-2\dy^2 +( .2\sin x \cos y -1  )$&
 $[-1,1]\times[-1,1]$&
 $u(x,y) = \cos x \sin y$
 \\\hline
 ey&
 $\Delta+1$ & 
 $[-1,1]\times[0,2\pi]$ &
 $u(x,y) = \exp( i y)$ 
 \\\hline
 Jc&
 $x^2\Delta -x\dx+\cos y\dy-(1-2x^2-\sin y)$ &
 $[1,5]\times[0, 2\pi]$ &
 $u(x,y) =J_1(x)\cos y$
 \\\hline
 JJ&
 $x^2\dx^2+y^2\dy^2+x\dx+y\dy+(x^2+y^2-1)$&
 $[1,3]\times[0,3]$&
 $u(x,y)=J_0(x)J_1(y)$
 \\\hline
\end{tabular}
\end{center}

\subsection{Higher order convergence}
Figure \ref{fig:ey} displays the results obtained for the ey test case.  In this case, the operator is the Helmholtz operator with a constant wave number. The GPWs constructed by Algorithm \ref{Algo:AbGPW} are then exactly the classical PWs, as it is the case with phase-based GPWs. For each value of $n$, from $1$ to $20$, the order of convergence observed is the order obtained in the theorem. The error decreases until it reaches machine precision, and as $h$ keeps decreasing beyond this point the error remains of the same magnitude.
\begin{figure}
\begin{center}
\begin{tikzpicture}
\begin{loglogaxis}[height=10cm,xlabel=$h$, ylabel=max error on disks of radius h,
legend pos=outer north east,
xmin=10^(-7),xmax=20,ymin=10^(-16),ymax=10^(0),
xtick={1,0.01,.0001,.000001,.00000001},
very thick,cycle list name=nequalstwenty,grid=major]
\addplot table [x=h, y=errAn1]{./GPWBAE2D2OCaseexpikapy50pts.dat};
\addlegendentry{$n=1$}
\addplot table [x=h, y=errAn2]{./GPWBAE2D2OCaseexpikapy50pts.dat};
\addlegendentry{$n=2$}
\addplot table [x=h, y=errAn3]{./GPWBAE2D2OCaseexpikapy50pts.dat};
\addlegendentry{$n=3$}
\addplot table [x=h, y=errAn4]{./GPWBAE2D2OCaseexpikapy50pts.dat};
\addlegendentry{$n=4$}
\addplot table [x=h, y=errAn5]{./GPWBAE2D2OCaseexpikapy50pts.dat};
\addlegendentry{$n=5$}
\addplot table [x=h, y=errAn6]{./GPWBAE2D2OCaseexpikapy50pts.dat};
\addlegendentry{$n=6$}
\addplot table [x=h, y=errAn7]{./GPWBAE2D2OCaseexpikapy50pts.dat};
\addlegendentry{$n=7$}
\addplot table [x=h, y=errAn8]{./GPWBAE2D2OCaseexpikapy50pts.dat};
\addlegendentry{$n=8$}
\addplot table [x=h, y=errAn9]{./GPWBAE2D2OCaseexpikapy50pts.dat};
\addlegendentry{$n=9$}
\addplot table [x=h, y=errAn10]{./GPWBAE2D2OCaseexpikapy50pts.dat};
\addlegendentry{$n=10$}
\addplot table [x=h, y=errAn11]{./GPWBAE2D2OCaseexpikapy50pts.dat};
\addlegendentry{$n=11$}
\addplot table [x=h, y=errAn12]{./GPWBAE2D2OCaseexpikapy50pts.dat};
\addlegendentry{$n=12$}
\addplot table [x=h, y=errAn13]{./GPWBAE2D2OCaseexpikapy50pts.dat};
\addlegendentry{$n=13$}
\addplot table [x=h, y=errAn14]{./GPWBAE2D2OCaseexpikapy50pts.dat};
\addlegendentry{$n=14$}
\addplot table [x=h, y=errAn15]{./GPWBAE2D2OCaseexpikapy50pts.dat};
\addlegendentry{$n=15$}
\addplot table [x=h, y=errAn16]{./GPWBAE2D2OCaseexpikapy50pts.dat};
\addlegendentry{$n=16$}
\addplot table [x=h, y=errAn17]{./GPWBAE2D2OCaseexpikapy50pts.dat};
\addlegendentry{$n=17$}
\addplot table [x=h, y=errAn18]{./GPWBAE2D2OCaseexpikapy50pts.dat};
\addlegendentry{$n=18$}
\addplot table [x=h, y=errAn19]{./GPWBAE2D2OCaseexpikapy50pts.dat};
\addlegendentry{$n=19$}
\addplot table [x=h, y=errAn20]{./GPWBAE2D2OCaseexpikapy50pts.dat};
\addlegendentry{$n=20$}
\addplot[dotted] coordinates {(2*10^-6, 10^-10) (2*10^-3, 10^-4)};
\addlegendentry{order 2}
\end{loglogaxis}
\end{tikzpicture}
\end{center}
\caption{Convergence results for the ey test case, for $n$ from $1$ to $20$. For each value of $n$, the expected order of convergence, namely $n+1$, is observed and the error decreases until it reaches machine precision.}
\label{fig:ey}
\end{figure}
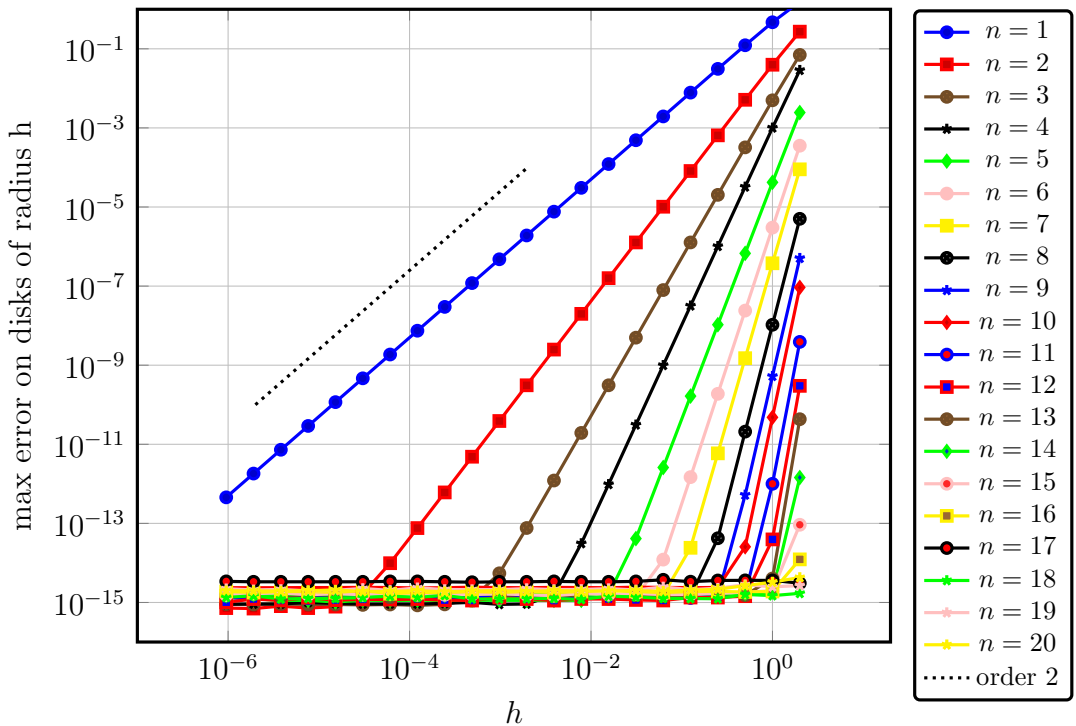

Figure \ref{fig:cond} displays the results obtained for four different test cases: Ae, Ac, A+ and cs. The first three correspond to the Helmholtz equation with a variable wave number, the variable wave number changes sign within of each of the associated domain, and the exact solutions oscillate for $\kappa^2>0$ and decay exponentially for $\kappa^2<0$. The last case corresponds to a more general operator \eqref{eq:GenOp}, with anisotropy in both the second and first order terms.  For each value of $n$, from $1$ to $20$, the order of convergence observed is the order obtained in the theorem. However, the matrix $\mathsf M_n^G$'s condition number increases with the value of $n$, so the coefficients $X_k$ of the linear combination $\displaystyle u_a\sum_{k=1}^{2n+1} X_{k}G_k$ are computed with decreasing accuracy as $n$ increases. This is illustrated by the behavior of the errors in Figure \ref{fig:cond}: the error's decrease is limited by the accuracy of the coefficient $X_k$, and as $h$ keeps decreasing beyond this point the error remains of the same magnitude. 
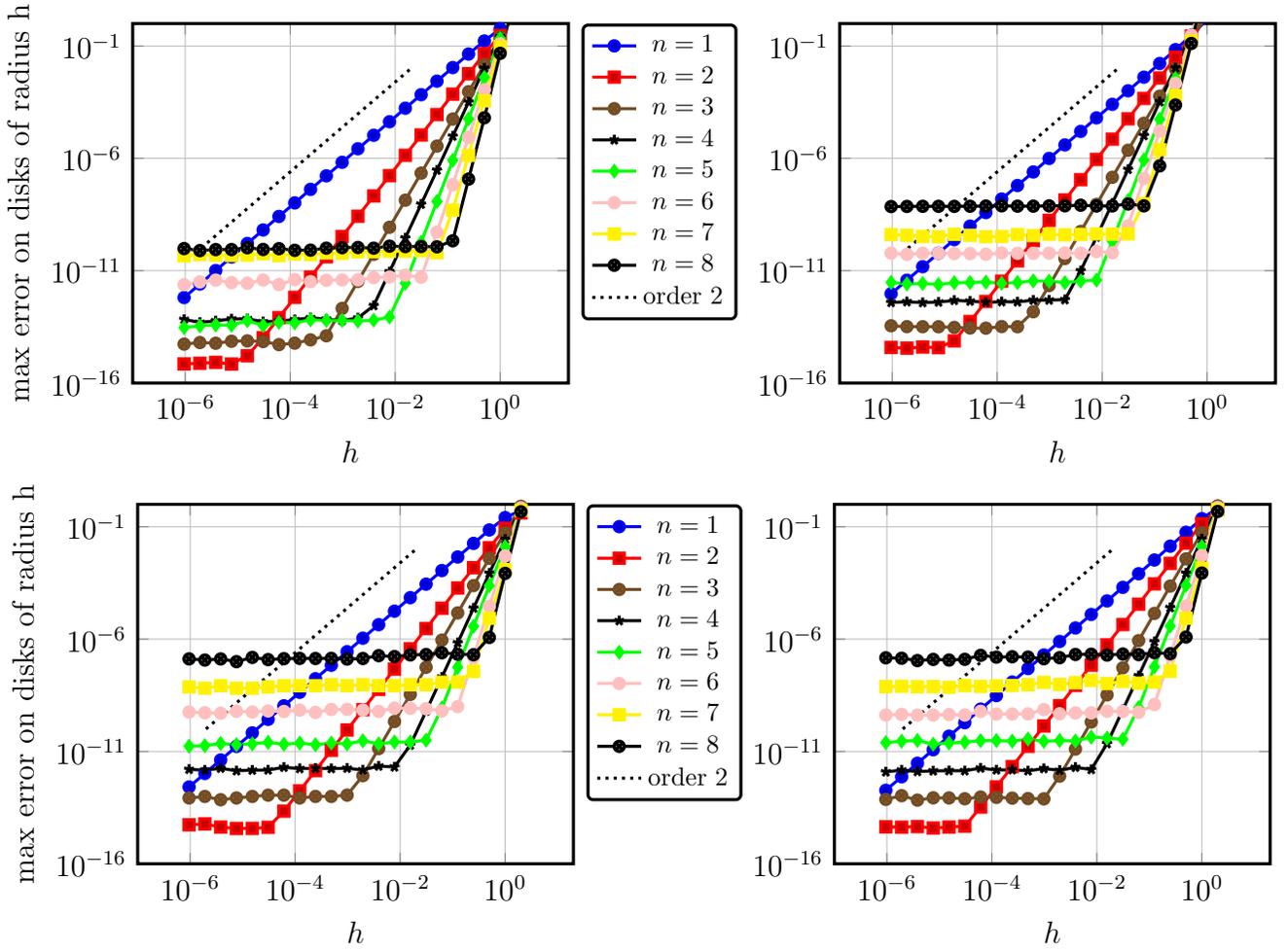
\begin{figure}
\begin{center}
\begin{tikzpicture}
\begin{loglogaxis}[height=6.5cm,xlabel=$h$, ylabel=max error on disks of radius h,
legend pos=outer north east,
xmin=10^(-7),xmax=20,ymin=10^(-16),ymax=10^(0),
xtick={1,0.01,.0001,.000001,.00000001},
very thick,cycle list name=nequalstwenty,grid=major]
\addplot table [x=h, y=errAn1]{./GPWBAE2D2OCaseAixpy50pts.dat};
\addlegendentry{$n=1$}
\addplot table [x=h, y=errAn2]{./GPWBAE2D2OCaseAixpy50pts.dat};
\addlegendentry{$n=2$}
\addplot table [x=h, y=errAn3]{./GPWBAE2D2OCaseAixpy50pts.dat};
\addlegendentry{$n=3$}
\addplot table [x=h, y=errAn4]{./GPWBAE2D2OCaseAixpy50pts.dat};
\addlegendentry{$n=4$}
\addplot table [x=h, y=errAn5]{./GPWBAE2D2OCaseAixpy50pts.dat};
\addlegendentry{$n=5$}
\addplot table [x=h, y=errAn6]{./GPWBAE2D2OCaseAixpy50pts.dat};
\addlegendentry{$n=6$}
\addplot table [x=h, y=errAn7]{./GPWBAE2D2OCaseAixpy50pts.dat};
\addlegendentry{$n=7$}
\addplot table [x=h, y=errAn8]{./GPWBAE2D2OCaseAixpy50pts.dat};
\addlegendentry{$n=8$}
\addplot[dotted] coordinates {(2*10^-6, 10^-10) (2*10^-2, 10^-2)};
\addlegendentry{order 2}
\end{loglogaxis}
\end{tikzpicture}
\begin{tikzpicture}
\begin{loglogaxis}[height=6.5cm,xlabel=$h$, 
xmin=10^(-7),xmax=20,ymin=10^(-16),ymax=10^(0),
xtick={1,0.01,.0001,.000001,.00000001},
very thick,cycle list name=nequalstwenty,grid=major]
\addplot table [x=h, y=errAn1]{./GPWBAE2D2OCasecosxsiny50pts.dat};
\addplot table [x=h, y=errAn2]{./GPWBAE2D2OCasecosxsiny50pts.dat};
\addplot table [x=h, y=errAn3]{./GPWBAE2D2OCasecosxsiny50pts.dat};
\addplot table [x=h, y=errAn4]{./GPWBAE2D2OCasecosxsiny50pts.dat};
\addplot table [x=h, y=errAn5]{./GPWBAE2D2OCasecosxsiny50pts.dat};
\addplot table [x=h, y=errAn6]{./GPWBAE2D2OCasecosxsiny50pts.dat};
\addplot table [x=h, y=errAn7]{./GPWBAE2D2OCasecosxsiny50pts.dat};
\addplot table [x=h, y=errAn8]{./GPWBAE2D2OCasecosxsiny50pts.dat};
\addplot[dotted] coordinates {(2*10^-6, 10^-10) (2*10^-2, 10^-2)};
\end{loglogaxis}
\end{tikzpicture}
\begin{tikzpicture}
\begin{loglogaxis}[height=6.5cm,xlabel=$h$, ylabel=max error on disks of radius h,
legend pos=outer north east,
xmin=10^(-7),xmax=20,ymin=10^(-16),ymax=10^(0),
xtick={1,0.01,.0001,.000001,.00000001},
very thick,cycle list name=nequalstwenty,grid=major]
\addplot table [x=h, y=errAn1]{./GPWBAE2D2OCaseAixcosy50pts.dat};
\addlegendentry{$n=1$}
\addplot table [x=h, y=errAn2]{./GPWBAE2D2OCaseAixcosy50pts.dat};
\addlegendentry{$n=2$}
\addplot table [x=h, y=errAn3]{./GPWBAE2D2OCaseAixcosy50pts.dat};
\addlegendentry{$n=3$}
\addplot table [x=h, y=errAn4]{./GPWBAE2D2OCaseAixcosy50pts.dat};
\addlegendentry{$n=4$}
\addplot table [x=h, y=errAn5]{./GPWBAE2D2OCaseAixcosy50pts.dat};
\addlegendentry{$n=5$}
\addplot table [x=h, y=errAn6]{./GPWBAE2D2OCaseAixcosy50pts.dat};
\addlegendentry{$n=6$}
\addplot table [x=h, y=errAn7]{./GPWBAE2D2OCaseAixcosy50pts.dat};
\addlegendentry{$n=7$}
\addplot table [x=h, y=errAn8]{./GPWBAE2D2OCaseAixcosy50pts.dat};
\addlegendentry{$n=8$}
\addplot[dotted] coordinates {(2*10^-6, 10^-10) (2*10^-2, 10^-2)};
\addlegendentry{order 2}
\end{loglogaxis}
\end{tikzpicture}\begin{tikzpicture}
\begin{loglogaxis}[height=6.5cm,xlabel=$h$, 
xmin=10^(-7),xmax=20,ymin=10^(-16),ymax=10^(0),
xtick={1,0.01,.0001,.000001,.00000001},
very thick,cycle list name=nequalstwenty,grid=major]
\addplot table [x=h, y=errAn1]{./GPWBAE2D2OCaseAixeiy50pts.dat};
\addplot table [x=h, y=errAn2]{./GPWBAE2D2OCaseAixeiy50pts.dat};
\addplot table [x=h, y=errAn3]{./GPWBAE2D2OCaseAixeiy50pts.dat};
\addplot table [x=h, y=errAn4]{./GPWBAE2D2OCaseAixeiy50pts.dat};
\addplot table [x=h, y=errAn5]{./GPWBAE2D2OCaseAixeiy50pts.dat};
\addplot table [x=h, y=errAn6]{./GPWBAE2D2OCaseAixeiy50pts.dat};
\addplot table [x=h, y=errAn7]{./GPWBAE2D2OCaseAixeiy50pts.dat};
\addplot table [x=h, y=errAn8]{./GPWBAE2D2OCaseAixeiy50pts.dat};
\addplot[dotted] coordinates {(2*10^-6, 10^-10) (2*10^-2, 10^-2)};
\end{loglogaxis}
\end{tikzpicture}
\end{center}
\caption{Convergence results for the  A+ (top left), cs (top right), Ac (bottom left) and Ae (bottom right) test cases, for $n$ from $1$ to $10$. While the expected order of convergence is observed in each case, the matrix $\mathsf M_n^G$'s ill-conditioning increases together with the value of $n$ which limits the accuracy of the corresponding GPW approximation.}
\label{fig:cond}
\end{figure}

Preliminary results suggest that the normalization chosen to compute the GPWs has a strong impact on $\mathsf M_n^G$'s condition number, as illustrated in Figure \ref{fig:csPWnorm}. These results were produced with a different normalization of the GPWs, namely $\vec{d}\propto (\cos\theta,\sin\theta)$ instead of the normalization \eqref{eq:GenNorm}. Note for instance that for $n=8$, the threshold shifts from approximately $10^{-8}$ in Figure \ref{fig:cond} (top right) to less than $10^{-12}$ in Figure  \ref{fig:csPWnorm}. However, we would like to emphasize that this simpler normalization provides better results for the cs test case, while it does not impact significantly the other three test cases presented in Figure \ref{fig:cond}. Further investigation in this direction is the topic of ongoing research.
\begin{figure}
\begin{center}
\begin{tikzpicture}
\begin{loglogaxis}[height=10cm,xlabel=$h$, ylabel=max error on disks of radius h,
legend pos=outer north east,
xmin=10^(-7),xmax=20,ymin=10^(-16),ymax=10^(0),
xtick={1,0.01,.0001,.000001,.00000001},
very thick,cycle list name=nequalstwenty,grid=major]
\addplot table [x=h, y=errAn1]{./GPWBAE2D2OCasecosxsiny50ptsNormAsPW.dat};
\addlegendentry{$n=1$}
\addplot table [x=h, y=errAn2]{./GPWBAE2D2OCasecosxsiny50ptsNormAsPW.dat};
\addlegendentry{$n=2$}
\addplot table [x=h, y=errAn3]{./GPWBAE2D2OCasecosxsiny50ptsNormAsPW.dat};
\addlegendentry{$n=3$}
\addplot table [x=h, y=errAn4]{./GPWBAE2D2OCasecosxsiny50ptsNormAsPW.dat};
\addlegendentry{$n=4$}
\addplot table [x=h, y=errAn5]{./GPWBAE2D2OCasecosxsiny50ptsNormAsPW.dat};
\addlegendentry{$n=5$}
\addplot table [x=h, y=errAn6]{./GPWBAE2D2OCasecosxsiny50ptsNormAsPW.dat};
\addlegendentry{$n=6$}
\addplot table [x=h, y=errAn7]{./GPWBAE2D2OCasecosxsiny50ptsNormAsPW.dat};
\addlegendentry{$n=7$}
\addplot table [x=h, y=errAn8]{./GPWBAE2D2OCasecosxsiny50ptsNormAsPW.dat};
\addlegendentry{$n=8$}
\addplot table [x=h, y=errAn9]{./GPWBAE2D2OCasecosxsiny50ptsNormAsPW.dat};
\addlegendentry{$n=9$}
\addplot table [x=h, y=errAn10]{./GPWBAE2D2OCasecosxsiny50ptsNormAsPW.dat};
\addlegendentry{$n=10$}
\addplot table [x=h, y=errAn11]{./GPWBAE2D2OCasecosxsiny50ptsNormAsPW.dat};
\addlegendentry{$n=11$}
\addplot table [x=h, y=errAn12]{./GPWBAE2D2OCasecosxsiny50ptsNormAsPW.dat};
\addlegendentry{$n=12$}
\addplot table [x=h, y=errAn13]{./GPWBAE2D2OCasecosxsiny50ptsNormAsPW.dat};
\addlegendentry{$n=13$}
\addplot table [x=h, y=errAn14]{./GPWBAE2D2OCasecosxsiny50ptsNormAsPW.dat};
\addlegendentry{$n=14$}
\addplot table [x=h, y=errAn15]{./GPWBAE2D2OCasecosxsiny50ptsNormAsPW.dat};
\addlegendentry{$n=15$}
\addplot table [x=h, y=errAn16]{./GPWBAE2D2OCasecosxsiny50ptsNormAsPW.dat};
\addlegendentry{$n=16$}
\addplot table [x=h, y=errAn17]{./GPWBAE2D2OCasecosxsiny50ptsNormAsPW.dat};
\addlegendentry{$n=17$}
\addplot table [x=h, y=errAn18]{./GPWBAE2D2OCasecosxsiny50ptsNormAsPW.dat};
\addlegendentry{$n=18$}
\addplot table [x=h, y=errAn19]{./GPWBAE2D2OCasecosxsiny50ptsNormAsPW.dat};
\addlegendentry{$n=19$}
\addplot table [x=h, y=errAn20]{./GPWBAE2D2OCasecosxsiny50ptsNormAsPW.dat};
\addlegendentry{$n=20$}
\addplot[dotted] coordinates {(2*10^-6, 10^-10) (2*10^-3, 10^-4)};
\addlegendentry{order 2}
\end{loglogaxis}
\end{tikzpicture}
\end{center}
\caption{Convergence results for the cs test case, for $n$ from $1$ to $20$, with the classical PW normalization. For each value of $n$, the expected order of convergence, namely $n+1$, is observed and the compared to Figure \ref{fig:cond} (top right) we observe a decrease of the thresholds by several orders of magnitude.}
\label{fig:csPWnorm}
\end{figure}
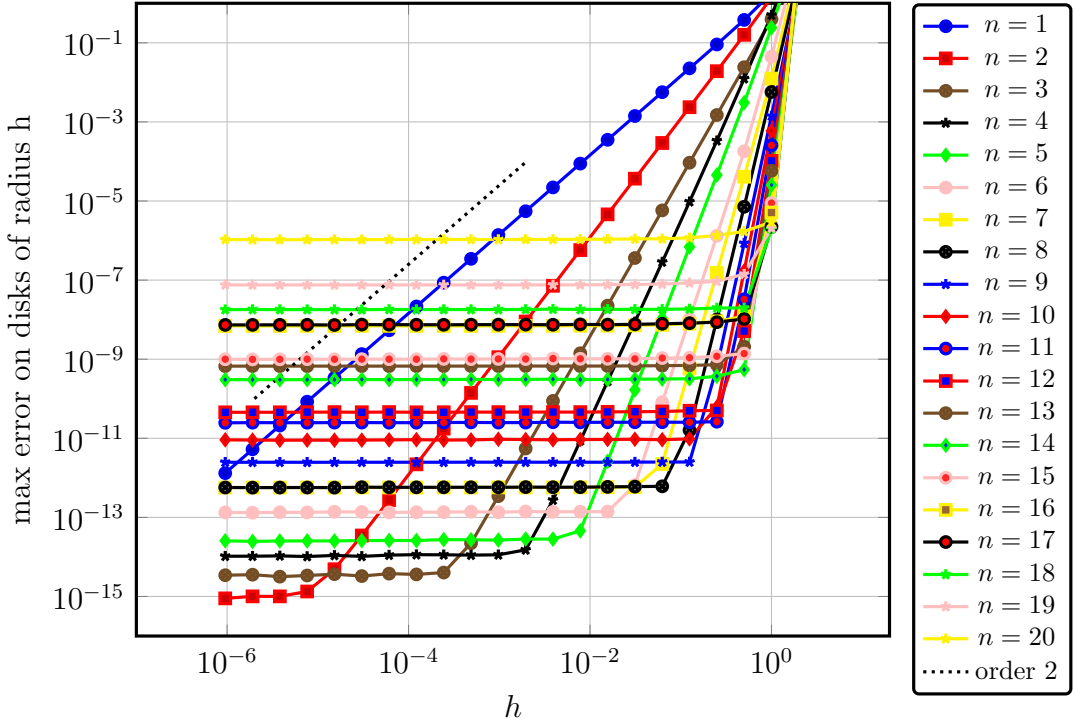

\subsection{Comparison with Phase-based GPWs}

Figure \ref{fig:JsPreAs} evidences the benefit of amplitude-based GPWs over phase-based GPWs in the pre-asymptotic regime. The construction of increasingly high order GPWs involves polynomials of increasingly high degree.
While phased-based GPWs are designed as 
$$G(x,y) := \exp P(x,y) \text{ with }\displaystyle P(x,y)=\sum_{0\leq i_x+i_y\leq \deg P} \lambda_{i_x,i_y} (x-\xG)^{i_x} (y-\yG)^{i_y},$$ 
amplitude-based GPWs are designed as 
$$G(x,y) := Q(x,y)\exp 
\begin{pmatrix} \lambda_{1,0}\\\lambda_{0,1}\end{pmatrix}\cdot \begin{pmatrix} x-\xG\\y-\yG\end{pmatrix}\text{ with }\displaystyle Q(x,y)=\sum_{0\leq i_x+i_y\leq \deg Q} \mu_{i_x,i_y} (x-\xG)^{i_x} (y-\yG)^{i_y}.$$
Therefore, in the first case, evaluating a GPW implies evaluating $ \exp \lambda_{i_x,i_y} (x-\xG)^{i_x} (y-\yG)^{i_y}$ with $0\leq i_x+i_y\leq \deg P$, while in the second case the exponential terms are limited to $\exp \lambda_{1,0}( x-\xG)$ and $\exp \lambda_{0,1}(y-\yG)$. This explains the large values of phase-based GPWs  evaluated at $(x,y)$ such that $|(x,y)-\pG|\geq 1$.
\begin{figure}
\begin{tikzpicture}
\begin{loglogaxis}[height=6.5cm,xlabel=$h$, ylabel=max error on disks of radius h,
legend pos=outer north east,
xmin=10^(-7),xmax=20,ymin=10^(-16),ymax=10,
xtick={1,0.01,.0001,.000001,.00000001},
very thick,cycle list name=mycolorlist,grid=major]
\addplot table [x=h, y=errAn1]{./GPWBAE2D2OCaseJ1xcosy50ptsCOMP.dat};
\addlegendentry{$n=1$ Ab}
\addplot table [x=h, y=errAn2]{./GPWBAE2D2OCaseJ1xcosy50ptsCOMP.dat};
\addlegendentry{$n=2$ Ab}
\addplot table [x=h, y=errAn3]{./GPWBAE2D2OCaseJ1xcosy50ptsCOMP.dat};
\addlegendentry{$n=3$ Ab}
\addplot table [x=h, y=errAn4]{./GPWBAE2D2OCaseJ1xcosy50ptsCOMP.dat};
\addlegendentry{$n=4$ Ab}
\addplot table [x=h, y=errAn5]{./GPWBAE2D2OCaseJ1xcosy50ptsCOMP.dat};
\addlegendentry{$n=5$ Ab}
\addplot table [x=h, y=errPn1]{./GPWBAE2D2OCaseJ1xcosy50ptsCOMP.dat};
\addlegendentry{$n=1$ Pb}
\addplot table [x=h, y=errPn2]{./GPWBAE2D2OCaseJ1xcosy50ptsCOMP.dat};
\addlegendentry{$n=2$ Pb}
\addplot table [x=h, y=errPn3]{./GPWBAE2D2OCaseJ1xcosy50ptsCOMP.dat};
\addlegendentry{$n=3$ Pb}
\addplot table [x=h, y=errPn4]{./GPWBAE2D2OCaseJ1xcosy50ptsCOMP.dat};
\addlegendentry{$n=4$ Pb}
\addplot table [x=h, y=errPn5]{./GPWBAE2D2OCaseJ1xcosy50ptsCOMP.dat};
\addlegendentry{$n=5$ Pb}
\addplot[dotted] coordinates {(2*10^-6, 10^-10) (2*10^-3, 10^-4)};
\addlegendentry{order 2}
\addplot[dashed] coordinates {(5*10^-2, 10^-14) (5*10^-0, 10^-2)};
\addlegendentry{order 6}
\end{loglogaxis}
\end{tikzpicture}
\begin{tikzpicture}
\begin{loglogaxis}[height=6.5cm,xlabel=$h$, 
xmin=10^(-2),xmax=20,ymin=10^(-4),ymax=10^(12),
very thick,cycle list name=mycolorlist,grid=major]
\addplot table [x=h, y=errAn1]{./GPWBAE2D2OCaseJ1xcosy50ptsCOMP.dat};
\addplot table [x=h, y=errAn2]{./GPWBAE2D2OCaseJ1xcosy50ptsCOMP.dat};
\addplot table [x=h, y=errAn3]{./GPWBAE2D2OCaseJ1xcosy50ptsCOMP.dat};
\addplot table [x=h, y=errAn4]{./GPWBAE2D2OCaseJ1xcosy50ptsCOMP.dat};
\addplot table [x=h, y=errAn5]{./GPWBAE2D2OCaseJ1xcosy50ptsCOMP.dat};
\addplot table [x=h, y=errPn1]{./GPWBAE2D2OCaseJ1xcosy50ptsCOMP.dat};
\addplot table [x=h, y=errPn2]{./GPWBAE2D2OCaseJ1xcosy50ptsCOMP.dat};
\addplot table [x=h, y=errPn3]{./GPWBAE2D2OCaseJ1xcosy50ptsCOMP.dat};
\addplot table [x=h, y=errPn4]{./GPWBAE2D2OCaseJ1xcosy50ptsCOMP.dat};
\addplot table [x=h, y=errPn5]{./GPWBAE2D2OCaseJ1xcosy50ptsCOMP.dat};
\end{loglogaxis}
\end{tikzpicture}

\begin{tikzpicture}
\begin{loglogaxis}[height=6.5cm,xlabel=$h$, ylabel=max error on disks of radius h,
legend pos=outer north east,
xmin=10^(-7),xmax=20,ymin=10^(-16),ymax=10,
xtick={1,0.01,.0001,.000001,.00000001},
very thick,cycle list name=mycolorlist,grid=major]
\addplot table [x=h, y=errAn1]{./GPWBAE2D2OCaseJ0xJ1y50ptsCOMP.dat};
\addlegendentry{$n=1$ Ab}
\addplot table [x=h, y=errAn2]{./GPWBAE2D2OCaseJ0xJ1y50ptsCOMP.dat};
\addlegendentry{$n=2$ Ab}
\addplot table [x=h, y=errAn3]{./GPWBAE2D2OCaseJ0xJ1y50ptsCOMP.dat};
\addlegendentry{$n=3$ Ab}
\addplot table [x=h, y=errAn4]{./GPWBAE2D2OCaseJ0xJ1y50ptsCOMP.dat};
\addlegendentry{$n=4$ Ab}
\addplot table [x=h, y=errAn5]{./GPWBAE2D2OCaseJ0xJ1y50ptsCOMP.dat};
\addlegendentry{$n=5$ Ab}
\addplot table [x=h, y=errPn1]{./GPWBAE2D2OCaseJ0xJ1y50ptsCOMP.dat};
\addlegendentry{$n=1$ Pb}
\addplot table [x=h, y=errPn2]{./GPWBAE2D2OCaseJ0xJ1y50ptsCOMP.dat};
\addlegendentry{$n=2$ Pb}
\addplot table [x=h, y=errPn3]{./GPWBAE2D2OCaseJ0xJ1y50ptsCOMP.dat};
\addlegendentry{$n=3$ Pb}
\addplot table [x=h, y=errPn4]{./GPWBAE2D2OCaseJ0xJ1y50ptsCOMP.dat};
\addlegendentry{$n=4$ Pb}
\addplot table [x=h, y=errPn5]{./GPWBAE2D2OCaseJ0xJ1y50ptsCOMP.dat};
\addlegendentry{$n=5$ Pb}
\addplot[dotted] coordinates {(2*10^-6, 10^-10) (2*10^-3, 10^-4)};
\addlegendentry{order 2}
\addplot[dashed] coordinates {(5*10^-3, 10^-14) (5*10^-1, 10^-2)};
\addlegendentry{order 6}
\end{loglogaxis}
\end{tikzpicture}
\begin{tikzpicture}
\begin{loglogaxis}[height=6.5cm,xlabel=$h$, 
xmin=10^(-2),xmax=20,ymin=10^(-4),ymax=10^(12),
very thick,cycle list name=mycolorlist,grid=major]
\addplot table [x=h, y=errAn1]{./GPWBAE2D2OCaseJ0xJ1y50ptsCOMP.dat};
\addplot table [x=h, y=errAn2]{./GPWBAE2D2OCaseJ0xJ1y50ptsCOMP.dat};
\addplot table [x=h, y=errAn3]{./GPWBAE2D2OCaseJ0xJ1y50ptsCOMP.dat};
\addplot table [x=h, y=errAn4]{./GPWBAE2D2OCaseJ0xJ1y50ptsCOMP.dat};
\addplot table [x=h, y=errAn5]{./GPWBAE2D2OCaseJ0xJ1y50ptsCOMP.dat};
\addplot table [x=h, y=errPn1]{./GPWBAE2D2OCaseJ0xJ1y50ptsCOMP.dat};
\addplot table [x=h, y=errPn2]{./GPWBAE2D2OCaseJ0xJ1y50ptsCOMP.dat};
\addplot table [x=h, y=errPn3]{./GPWBAE2D2OCaseJ0xJ1y50ptsCOMP.dat};
\addplot table [x=h, y=errPn4]{./GPWBAE2D2OCaseJ0xJ1y50ptsCOMP.dat};
\addplot table [x=h, y=errPn5]{./GPWBAE2D2OCaseJ0xJ1y50ptsCOMP.dat};
\end{loglogaxis}
\end{tikzpicture}
\caption{Convergence results for the JJ (bottom row) and Jc (top row) test cases, for $n$ from $1$ to $5$. Results obtained from amplitude-based and phase-based GPWs are respectively represented with solid and dashed lines for comparison. While the expected order of convergence is observed in each case, the zoom displayed on the right column evidences the advantage of the amplitude-based GPWs in the pre-asymptotic regime.}
\label{fig:JsPreAs}
\end{figure}
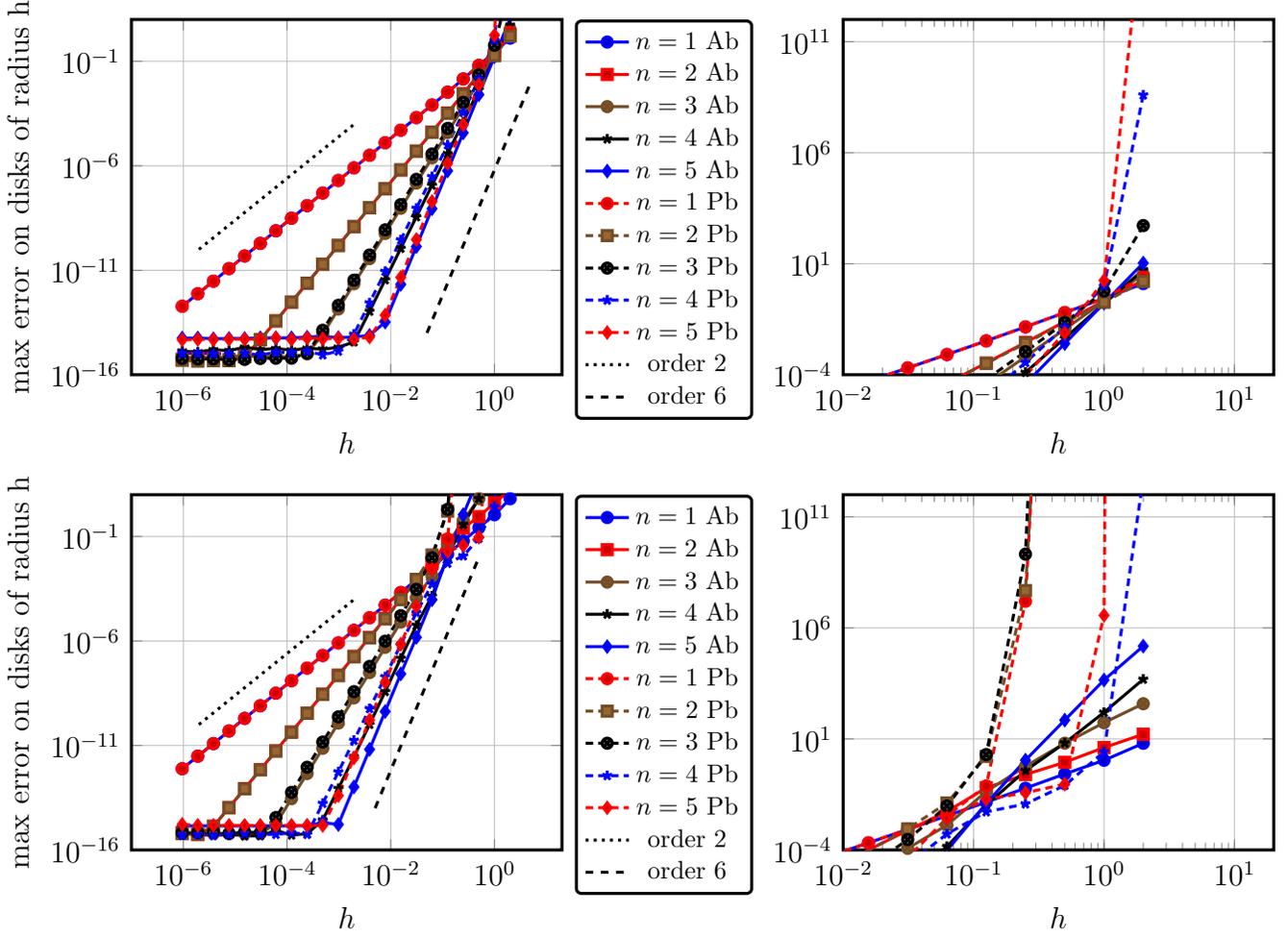

\section{Conclusion}
Amplitude-based GPWs constructed thanks to Algorithm \ref{Algo:AbGPW} are quasi-Trefftz functions, in the sense that the Taylor expansion of their image by the differential operator is zero up to a desired order.
These new GPWs enjoy the same asymptotic interpolation properties as their phase-based counterparts, and the numerical results presented in the previous section agree with the theoretical results. Moreover, we illustrate the fact that the interpolation error of the amplitude-based GPW approximation exhibits as expected a better pre-asymptotic behavior than  the interpolation error of the phase-based GPW approximation.

Future directions of research include studying the impact of the normalization on the conditioning of the GPW basis, and investigating GPWs behavior in the high frequency regime $\kappa>\!\!>1$.


\end{document}